\newtheorem{definition}{Definition}
\newtheorem{lemma}{Lemma}
\newtheorem{theorem}{Theorem}
\newtheorem{corollary}{Corollary}
\newtheorem{proposition}{Proposition}
\newtheorem{example}{Example}
\newcommand\sbullet[1][.5]{\mathbin{\vcenter{\hbox{\scalebox{#1}{$\bullet$}}}}}
\begin{document}

\title{Centrosymmetric Stochastic Matrices}
\author[Lei Cao]{Lei Cao \textsuperscript{1,2}}
\author[Darian McLaren]{Darian McLaren \textsuperscript{3}}
\author[Sarah Plosker]{Sarah Plosker \textsuperscript{3}}

\thanks{\textsuperscript{1}School of Mathematics and Statistics, Shandong Normal University, Shandong, 250358, China}
\thanks{\textsuperscript{2}Department of Mathematics, Halmos College, Nova Southeastern University, FL 33314, USA}
\thanks{\textsuperscript{3}Department of Mathematics and Computer Science, Brandon University,
Brandon, MB R7A 6A9, Canada}

\keywords{stochastic matrix, centrosymmetric matrix, extreme points, Birkhoff theorem, faces}
\subjclass[2010]{
% 05C50   	%Graphs and linear algebra (matrices, eigenvalues, etc.)
15B51,   %	Stochastic matrices
05C35   %	Extremal problems
 }

%%%%%%%%%%%%%%%%%%%%%%%%%%%%%%%%%%%%

\maketitle

\begin{abstract}
We consider the convex set $\Gamma_{m,n}$ of $m\times n$ stochastic matrices and the convex set $\Gamma_{m,n}^\pi\subset \Gamma_{m,n}$ of $m\times n$ centrosymmetric stochastic matrices (stochastic matrices that are symmetric under rotation by 180$\degree$). For $\Gamma_{m,n}$, we demonstrate a Birkhoff theorem for its extreme points  and  create a basis   from certain $(0,1)$-matrices.  For $\Gamma_{m,n}^\pi$, we characterize its extreme points   and create bases,  whose construction depends on the parity of $m$,  using our basis construction for stochastic matrices. For each of $\Gamma_{m,n}$ and  $\Gamma_{m,n}^\pi$, we further characterize their extreme points in terms of their associated bipartite graphs, we discuss a graph parameter called the fill and compute it for the various basis elements,  and we examine the number of vertices of the faces of these sets. We provide examples illustrating the results throughout.
\end{abstract}

\section{Introduction}

Doubly stochastic matrices are widely studied in the literature, in particular with regards to majorization theory. The extreme points of the set $\Omega_n$ of $n\times n$ doubly stochastic matrices have been characterized by permutation matrices via the infamous Birkhoff's theorem. A basis for $\Omega_n$ and faces of $\Omega_n$ are considered in \cite{BCD1967} and \cite{BG}, respectively. Various generalizations or relaxations of doubly stochastic matrices have also been studied, including a Birkhoff theorem for panstochastic matrices \cite{AlvisKinyon}, a geometric characterization of unistochastic matrices for small dimensions \cite{BengtssonEtAl}, extremal matrices of plane stochastic matrices of dimension 3 \cite{BrualdiCsisma}, line stochastic matrices of dimension 3 \cite{FischerSwart}, classes of substochastic matrices \cite{Fischer},  transportation polytopes \cite{ChoNam}, %centrosymmetric transportation polytopes \cite{ChenCaoKoyuncu},
and  multistochastic tensors \cite{CuiLiNg, KeLiXiao}. Symmetric, Hankel-symmetric, and
centrosymmetric doubly stochastic matrices were recently considered in \cite{BC2018}.

We add to this body of literature by considering $m\times n$ stochastic matrices and $m\times n$ centrosymmetric stochastic matrices (symmetric or Hankel-symmetric stochastic matrices are in fact doubly stochastic, so they fall under the results in \cite{BC2018}). We  characterize the extreme points of the set of $m\times n$ stochastic matrices and the extreme points of the set of  $m\times n$ centrosymmetric stochastic matrices, and we create bases for these sets. We consider the   bipartite graphs associated to these matrices, giving alternate characterizations of the extreme points of these sets, and discuss a graph parameter called the fill. Finally, we describe the faces of these two sets.

Throughout, we let $m,n\in \mathbb{N}$ (positive integers) and  $[n]=\{1, \dots, n\}$. We use  $ M_{m,n}$ to denote the set of all $m\times n$ real-valued matrices, and $M_n$ when $m=n$.

\begin{definition}
A  matrix $A=(a_{i,j})\in M_{m,n}$ is \emph{stochastic} if $a_{i,j}\geq 0$ for all $i\in [m], j\in [n]$ and $\sum_{j=1}^na_{i,j}=1$ for all $i\in [m]$. A  matrix $A\in M_n$ is \emph{doubly stochastic} if $a_{i,j}\geq 0$ for all $i, j\in [n]$, $\sum_{j=1}^na_{i,j}=1$ for all $i\in [n]$, and  $\sum_{i=1}^na_{i,j}=1$ for all $j\in [n]$. %If the sums are in fact $\leq 1$ rather than $=1$, then the matrix is \emph{substochastic} \emph{(doubly substochastic)}.
\end{definition}
Note that the term stochastic refers to row stochasticity; one can consider the notion of column stochasticity by taking the transpose of a stochastic matrix. The convex set of all doubly stochastic matrices in $M_n$ will be denoted by $\Omega_n$ and the convex set of all stochastic matrices in $M_{m,n}$ by $\Gamma_{m,n}$; their corresponding sets of extreme points will be denoted $\mathcal E(\Omega_n)$ and $\mathcal E(\Gamma_{m,n})$, respectively.

For any $n\in \mathbb{N}$, let $\mathcal S_n$ be the set of permutations on $[n]$. Any permutation $\sigma\in \mathcal S_n$ is in one-to-one correspondence with a permutation matrix: a matrix $P=(p_{i,j})\in M_n$ such that $p_{i, \sigma(i)}=1$ for all $i\in [n]$ and $p_{i,j}=0$ for $j\neq \sigma(i)$. Put more simply, a permutation matrix is a $(0,1)$-matrix with exactly one 1 in each row and each column. For fixed $n$, the set of all permutation matrices is denoted $\mathcal{P}_n$.

\section{Extreme Points}
\subsection{Extreme Points of the Set of Stochastic Matrices}
The famous Birkhoff's theorem states that the set of doubly stochastic matrices $\Omega_n$ is the convex hull of the set $\mathcal{P}_n$ of all permutation matrices. It follows that the set of  extreme points $\mathcal E(\Omega_n)= \mathcal{P}_n$.

With Birkhoff's theorem for doubly stochastic matrices in mind, it is intuitively clear that $\mathcal E(\Gamma_{m,n})$, the set of all extreme points of the stochastic matrices, is precisely the $(0,1)$-matrices with exactly one 1 in each row. Many proofs of Birkhoff's theorem can easily be found; the analogue  for stochastic matrices is somewhat more elusive, but does appear in \cite[Lemma 1.2]{Gubin}. In the interest of being self-contained, we provide a proof for the stochastic case  below. The proof is similar to the proof of Birkhoff's theorem using Hall's marriage theorem (see \cite[Chapter 2]{MOA} and the references therein). %bottom of page 50

We shall refer to the $(0,1)$-matrices in $M_{m,n}$  with exactly one 1 in each row as \emph{rectangular permutation matrices} (analogous results can be obtained for column stochastic matrices by considering $(0,1)$-matrices  with exactly one 1 in each column). We note that $m$ may or may not equal $n$ (when $m$ and $n$ are certainly equal, as in the case of doubly stochastic matrices, we revert back to the terminology of `permutation matrix' rather than `rectangular permutation matrix').
\begin{theorem}
The  extreme points of the set of $m\times n$ stochastic matrices are precisely the rectangular permutation matrices.
\end{theorem}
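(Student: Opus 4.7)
The plan is to prove both set inclusions separately. First I would verify that every rectangular permutation matrix $P$ belongs to $\mathcal E(\Gamma_{m,n})$. Suppose $P = \alpha A + (1-\alpha)B$ with $A, B \in \Gamma_{m,n}$ and $\alpha \in (0,1)$. Every entry of a stochastic matrix lies in $[0,1]$, because the entries are nonnegative and each row sums to $1$. Consequently, each $0$-entry of $P$ forces the corresponding entries of $A$ and $B$ to vanish, and each $1$-entry forces them both to equal $1$. Hence $A = B = P$, so $P$ is extreme.

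For the converse, I would argue by contrapositive: if $A \in \Gamma_{m,n}$ is not a rectangular permutation matrix, then $A$ is not extreme. In that case, some row $i$ of $A$ must contain at least two strictly positive entries $a_{i,j_1}$ and $a_{i,j_2}$, because a row having exactly one nonzero entry $a_{i,j}$ is forced to have $a_{i,j}=1$. Pick $\epsilon$ with $0 < \epsilon < \min(a_{i,j_1},\, a_{i,j_2})$ and construct matrices $A^{\pm}$ which agree with $A$ outside row $i$, but whose $(i,j_1)$ and $(i,j_2)$ entries are replaced by $a_{i,j_1}\pm\epsilon$ and $a_{i,j_2}\mp\epsilon$ respectively. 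The choice of $\epsilon$ ensures both $A^{\pm}$ remain nonnegative, and row $i$ still sums to $1$; all other rows are untouched, so $A^{\pm}\in\Gamma_{m,n}$. Since $A = \tfrac{1}{2}(A^+ + A^-)$ and $A^+\neq A^-$, $A$ is not extreme.

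There is no serious obstacle here. Unlike the doubly stochastic case, the stochasticity constraints decouple by row: $\Gamma_{m,n}$ is isomorphic, as a convex set, to the $m$-fold Cartesian product of the probability simplex in $\mathbb{R}^n$, whose extreme points are the standard basis vectors $e_1,\dots,e_n$. Since the extreme points of a finite product of convex sets are exactly the products of extreme points, this immediately identifies $\mathcal E(\Gamma_{m,n})$ with the set of matrices each of whose rows is some $e_j$, i.e.\ the rectangular permutation matrices. This product viewpoint is an alternative way to package the perturbation argument above. Notably, no analogue of Hall's marriage theorem is needed, because there are no simultaneous row and column constraints to satisfy, which is what makes the stochastic case strictly easier than Birkhoff's original setting.
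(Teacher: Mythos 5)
Your proof is correct, but it takes a genuinely different route from the paper's. The paper proves the theorem Birkhoff-style: by finite induction on the number of nonzero entries, it strips off a scalar multiple of a rectangular permutation matrix at each step and thereby shows that every matrix in $\Gamma_{m,n}$ is a convex combination of rectangular permutation matrices. That argument yields the stronger convex-hull statement together with an explicit decomposition algorithm (illustrated in the paper's Example 1 and reused in Lemma~1, where an arbitrary $A\in\Gamma^\pi_{m,n}$ is expanded as $\sum_i c_iR_i$). You instead characterize the extreme points directly: a two-entry perturbation within a single row shows any non-rectangular-permutation matrix is a midpoint of two distinct stochastic matrices, and the zero-pattern argument shows each rectangular permutation matrix is extreme; your observation that $\Gamma_{m,n}$ is the $m$-fold product of probability simplices, so its extreme points are exactly the products of vertices, packages both directions at once. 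Your approach is more elementary (no induction, and you are right that nothing like Hall's theorem is needed since the constraints decouple by row), and it has the minor virtue of explicitly verifying that rectangular permutation matrices \emph{are} extreme, which the paper leaves implicit. What it does not give by itself is the constructive convex-combination decomposition; to recover the statement that $\Gamma_{m,n}$ is the convex hull of its extreme points (needed later in the paper), you would appeal to Minkowski's theorem for compact convex sets in finite dimensions, whereas the paper's induction produces the decomposition outright.
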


\begin{proof}
It suffices to show that every $m\times n$ stochastic matrix can be expressed as a convex combination of $m \times n$ rectangular permutation matrices. We proceed by finite induction on the number of nonzero elements of the matrix while keeping $m$ and $n$ fixed. From the row constraint on a stochastic matrix it is clear that the minimum number of nonzero elements is $m$,  in which case the stochastic matrix is in fact a rectangular permutation matrix. Now, consider a stochastic matrix $A_0$ with a number of nonzero elements greater than $m$. For each row, mark the smallest positive element (if this element is not unique,  mark the leftmost; this is done simply for the sake of making a choice and in fact one can randomly choose any positive element in each row). Let $a_0\in (0,1)$ denote the minimum of all of the marked elements and let $E_0$ denote the $m\times n$  $(0,1)$-matrix with a 1 in each of the marked positions. The matrix $A_1=(A_0-a_0E_0)/(1-E_0)$ is therefore an $m\times n$ stochastic matrix with fewer nonzero elements than $A_0$. By the induction hypothesis, there then exists coefficients $a_1,\dots,a_k\in (0,1]$ and  $(0,1)$-matrices with exactly one 1 in each row $E_1,\dots,E_k$ such that $A_1=\sum_{i=1}^k a_i E_i$, where $\sum_{i=1}^k a_i=1$. Expressing $A_0$ in term of $A_1$ and $E_0$ gives
\begin{eqnarray*}
A_0=a_0 E_0 + (1-a_0)A_1 = a_0 E_0 + (1-a_0)\sum_{i=1}^k a_i E_i,
\end{eqnarray*}
yielding $A_0$ as a linear combination of rectangular permutation matrices, with coefficients of the linear combination in the range of $(0,1)$. To see that this is in fact a convex combination,  we take the sum of the coefficients
\begin{equation*}
a_0 + (1-a_0)\sum_{i=1}^k a_i = a_0 + (1-a_0)(1)=1.
\end{equation*}
\end{proof}

We provide an example to illustrate the proof.
\begin{example}
Let $$\displaystyle A=\begin{pmatrix}1/2 & 0 & 1/2 & 0 \\ 7/10 & 0 & 0& 3/10 \\ 2/5 & 1/5 & 2/5 &0  \end{pmatrix}\in \Gamma_{3,4}.$$

To write $A$ as a convex combination of %$(0,1)$-matrices all of whose row sums are $1,$
rectangular permutation matrices, one can do the following:

\begin{enumerate}
\item Randomly select a positive element from each row. Since the row sum is $1$ for each row, there must be positive entries in each row.

$A=\left(\begin{tabular}{cccc}\cellcolor[gray]{0.9}$1/2$ & $0$ & $1/2$ & $0$ \\ $3/10$ & $0$ & $0$& \cellcolor[gray]{0.9}$7/10$ \\ \cellcolor[gray]{0.9}$2/5$ & $1/5$ & $2/5$ &$0$  \end{tabular}\right)$

\item Subtract a $(0,1)$-matrix which has $1$'s at the same positions as selected positive entries multiplied by the least number among these three selected positive entries. Let
    $$E_0=\begin{pmatrix}1 & 0& 0& 0 \\ 0&0&0&1 \\ 1& 0& 0 &0 \end{pmatrix}$$
    $$A_1=\big(A- \frac{2}{5} E_0\big)/\big(1-\frac{2}{5}\big)=\begin{pmatrix}1/6 & 0 & 5/6 & 0 \\ 1/2 & 0 & 0 &1/2 \\ 0 &1/3 & 2/3 & 0  \end{pmatrix}.$$
    Then $A_1$ is in $\Gamma_{3,4}$ and $A_1$ has fewer positive elements than $A.$

\item Repeat this process. Let
$$E_1=\begin{pmatrix}0 & 0& 1& 0 \\ 1&0&0&0 \\ 0& 0& 1 &0 \end{pmatrix}$$
    $$A_2=\big(A_1- \frac{1}{2} E_1\big)/\big(1-\frac{1}{2}\big)=\begin{pmatrix}1/3 & 0 & 2/3 & 0 \\ 0 & 0 & 0 &1 \\ 0 &2/3 & 1/3 & 0  \end{pmatrix}.$$
\item
Let
$$E_2=\begin{pmatrix}0 & 0& 1& 0 \\ 0&0&0&1 \\ 0& 1& 0 &0 \end{pmatrix}$$
    $$A_3=\big(A_2- \frac{2}{3} E_2\big)/\big(1-\frac{2}{3}\big)=\begin{pmatrix}1 & 0 & 0  & 0 \\ 0 & 0 & 0 &1 \\ 0 & 0 & 1 & 0  \end{pmatrix}=E_3.$$

\end{enumerate}

Then we have,

\begin{eqnarray}\nonumber A&=&\frac{2}{5}E_0+\frac{3}{5}A_1\\
\nonumber &=&\frac{2}{5}E_0+\frac{3}{5}\big(\frac{1}{2}E_1+\frac{1}{2}A_2\big)\\
\nonumber &=& \frac{2}{5}E_0+\frac{3}{10}E_1+\frac{3}{10}A_2\\
\nonumber &=&\frac{2}{5}E_0+\frac{3}{10}E_1+\frac{3}{10}\big(\frac{2}{3}E_2+\frac{1}{3}E_3\big)\\
\nonumber &=&\frac{2}{5}E_0+\frac{3}{10}E_1+\frac{1}{5}E_2+\frac{1}{10}E_3.
\end{eqnarray}

\end{example}
Given a matrix $A\in M_n$, one can consider the usual definition of $A$ being symmetric if $A=A^t$, i.e.\ $a_{i,j}=a_{j,i}$ for all $i,j\in [n]$ (that is, $A$ is symmetric about its diagonal). Another type of symmetry is symmetry about the anti-diagonal: a matrix $A$ is \emph{Hankel symmetric} (also called persymmetric or mirror symmetric), denoted   $A=A^h$, if $a_{i,j}=a_{n+1-j, n+1-i}$ for all $i,j\in [n]$. A third type of symmetry is symmetry when rotated by 180$\degree$: a matrix $A\in M_{m,n}$ is \emph{centrosymmetric}, denoted $A=A^\pi$, if $a_{i,j}=a_{m+1-i, n+1-j}$ for all $i\in [m], j\in [n]$.
%
%Note that if a matrix is either symmetric or Hankel symmetric, as well as (sub)stochastic, then the matrix is automatically  doubly (sub)stochastic, so we focus on the study of  centrosymmetric stochastic matrices.
Note that if a matrix is either symmetric or Hankel symmetric, as well as stochastic, then the matrix is automatically  doubly  stochastic, so we focus on the study of  centrosymmetric stochastic matrices.

\subsection{Extreme Points of the Set of Centrosymmetric Stochastic Matrices}
We are interested in characterizing the extreme points of $\Gamma^\pi_{m,n}$, the set of all $m\times n$ centrosymmetric stochastic matrices. We will see that the parity of $m$ plays an important role.

\begin{lemma}\label{lemext} All extreme points of $\Gamma^\pi_{m,n}$ can be written in the form $\frac{1}{2}(R+R^\pi)$ where $R$ is a rectangular permutation matrix.
\end{lemma}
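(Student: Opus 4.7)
The approach is to combine the Birkhoff-style theorem for stochastic matrices (the preceding theorem) with a symmetrization argument. Let $A$ be an extreme point of $\Gamma^\pi_{m,n}$. Since $A$ is in particular an element of $\Gamma_{m,n}$, the previous theorem gives a decomposition
\[
A = \sum_{i=1}^k a_i R_i, \qquad a_i>0,\ \sum_{i=1}^k a_i = 1,
\]
where each $R_i$ is a rectangular permutation matrix. Applying the operation $\pi$ (rotation by $180\degree$) to both sides and using $A=A^\pi$ yields $A = \sum_i a_i R_i^\pi$, and averaging these two identities gives
\[
A \;=\; \tfrac12(A+A^\pi) \;=\; \sum_{i=1}^k a_i \cdot \tfrac12\bigl(R_i + R_i^\pi\bigr).
\]

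Next I would check that each summand $M_i := \tfrac12(R_i + R_i^\pi)$ lies in $\Gamma^\pi_{m,n}$. It is stochastic because it is a convex combination of two rectangular permutation matrices, each of which is stochastic; and it is centrosymmetric because $(R_i^\pi)^\pi = R_i$, so $M_i^\pi = \tfrac12(R_i^\pi + R_i) = M_i$. Hence the displayed identity exhibits $A$ as a convex combination of elements of $\Gamma^\pi_{m,n}$.

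Finally, extremality of $A$ in $\Gamma^\pi_{m,n}$ forces $A = M_i$ for every $i$ with $a_i>0$, i.e.\ $A = \tfrac12(R_i + R_i^\pi)$ for any such $i$, which is the desired form with $R=R_i$.

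The argument is essentially a symmetrization on top of the stochastic Birkhoff theorem, so there is no real technical obstacle; the one point to be careful about is simply that the individual $R_i$ need not themselves be centrosymmetric, which is precisely why the averaged matrices $\tfrac12(R_i+R_i^\pi)$ are the right objects to recognize as the building blocks. Note also that the lemma does not yet assert that every such $\tfrac12(R + R^\pi)$ is extreme, only that extreme points have this shape; the converse direction and the dependence on the parity of $m$ presumably appear in subsequent results.
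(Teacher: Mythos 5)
Your proposal is correct and follows essentially the same route as the paper: decompose $A$ via the stochastic Birkhoff theorem, apply $\pi$, and average to express $A$ as a convex combination of the symmetrized matrices $\tfrac12(R_i+R_i^\pi)$. The only difference is that you spell out the final step (each $\tfrac12(R_i+R_i^\pi)$ lies in $\Gamma^\pi_{m,n}$ and extremality forces $A$ to equal one of them), which the paper leaves implicit.
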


\begin{proof} Let $A\in \Gamma^\pi_{m,n}\subset \Gamma_{m,n}.$ Then \begin{equation}\label{eq1} A=c_1R_1+c_2R_2+\ldots+c_kR_k\end{equation} where $R_1,R_2,\ldots,R_k$ are rectangular permutation matrices, $c_i\geq 0$ and $c_1+c_2+\ldots+c_k=1.$ Then
\begin{equation}\label{eq2} A=A^\pi=c_1R_1^\pi+c_2R_2^\pi+\ldots+c_kR_k^\pi.\end{equation}
Take the average of \eqref{eq1} and \eqref{eq2},
  \begin{equation} \nonumber A=\frac{1}{2}(A+A^\pi)=\frac{1}{2}[c_1(R_1+R_1^\pi)]+\frac{1}{2}[c_2(R_2+R_2^\pi)]+\ldots+\frac{1}{2}[c_k(R_k+R_k^\pi)],\end{equation}
  which shows that any matrix in $\Gamma^\pi_{m,n}$ can be written as a convex combination of matrices in the form $\frac{1}{2}(R+R^\pi)$ for  rectangular permutation matrices $R.$
\end{proof}

\begin{lemma}\label{lemnotcent}
Let $R$ be a $m \times n$ rectangular permutation matrix where $m$ is an even number. If $R$ is not centrosymmetric then there exists $m \times n$ centrosymmetric rectangular permutation matrices $Q_1$ and $Q_2$ such that $R+R^\pi=Q_1+Q_2$ where $Q_1\neq Q_2$.
\end{lemma}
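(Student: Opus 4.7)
The plan is to build $Q_1$ and $Q_2$ row-pair by row-pair, using the fact that $m$ even partitions the rows into $m/2$ disjoint mirror pairs $\{i, m+1-i\}$ for $i=1,\dots,m/2$ with no row fixed by the involution. Let $\sigma(i)$ denote the unique column with a $1$ in row $i$ of $R$; then $R^\pi$ has its $1$ in row $i$ at column $n+1-\sigma(m+1-i)$, so in row $i$ of $R+R^\pi$ the nonzero entries sit at columns $a_i := \sigma(i)$ and $b_i := n+1-\sigma(m+1-i)$ (either as two distinct $1$s, or as a single $2$ when $a_i = b_i$).

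For each mirror pair $\{i, m+1-i\}$ (with $i \le m/2$) I would define the two relevant rows of $Q_1$ and $Q_2$ as follows. If $a_i = b_i$, I put a $1$ in row $i$ of both $Q_1$ and $Q_2$ at column $a_i$, and—forced by centrosymmetry—a $1$ in row $m+1-i$ of both $Q_j$ at column $n+1-a_i$. If $a_i \neq b_i$, I put the $1$ of $Q_1$ in row $i$ at column $a_i$ and the $1$ of $Q_2$ in row $i$ at column $b_i$, extending each symmetrically to row $m+1-i$ at columns $n+1-a_i$ and $n+1-b_i$ respectively.

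I would then read off directly from this construction that each $Q_j$ is centrosymmetric, has exactly one $1$ per row (so is a rectangular permutation matrix), and that the contributions of $Q_1$ and $Q_2$ in each mirror pair add up to the corresponding rows of $R+R^\pi$; summing over all pairs gives $R+R^\pi = Q_1+Q_2$.

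Finally, to obtain $Q_1 \neq Q_2$ I would invoke the hypothesis that $R$ is not centrosymmetric: since $R$ is a rectangular permutation matrix, $R \neq R^\pi$ forces the existence of some index $i_0$ with $\sigma(i_0) \neq n+1-\sigma(m+1-i_0)$, which is exactly a mirror pair falling into the second case above; in row $i_0$ the matrices $Q_1$ and $Q_2$ have their $1$s in different columns, so they differ. The main (if modest) obstacle is the bookkeeping around the pairing: it is crucial that $m$ is even so that no row is its own mirror—otherwise a self-paired row could force a $1$ at the fixed symmetry column with no way to legitimately split the contributions between $Q_1$ and $Q_2$ while keeping both rectangular permutation matrices.
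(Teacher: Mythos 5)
Your construction is correct and is essentially the paper's own proof: the paper writes each row of the top half of $R+R^\pi$ as a sum of two unit vectors $a_i+b_i$, collects the $a_i$ into a matrix $P$ and the $b_i$ into a matrix $Q$ (zero on the bottom half), and takes $Q_1=P+P^\pi$, $Q_2=Q+Q^\pi$ — which is exactly your pair-by-pair centrosymmetric extension, with the same use of non-centrosymmetry to force $a_{i_0}\neq b_{i_0}$ for some $i_0\leq m/2$ and hence $Q_1\neq Q_2$. No gaps; your closing remark about why $m$ even matters (no self-paired row) is a nice addition that the paper leaves implicit.
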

\begin{proof}
Let $\tilde{R}=R+R^\pi$ and so $\tilde{R}$ is a $m\times n$ centrosymmetric matrix where each row sums to 2. Let $\{\tilde{r}_i\}_{i=1}^m$ be the set of $m$ row vectors for $\tilde{R}$. From $R$ being a rectangular permutation matrix, each vector $\tilde{r}_i$ will either have a component with value 2, or two components with value 1. As $R$ is not centrosymmetric, there must exist a natural number $k\leq\frac{m}{2}$ such that $\tilde{r}_k$ satisfies the latter case. Either way, for every $i\in\{1,\dots,m\}$ there exists unit vectors $a_i$ and $b_i$ (that may or may not be equal) such that $\tilde{r}_i=a_i+b_i$. Define the $m\times n$ matrices $P=[p_i]_{i=1}^m$ and $Q=[q_i]_{i=1}^m$ and their respective row vectors $p_i$ and $q_i$ by
\begin{eqnarray*} p_i=\begin{cases}
      a_i & i\leq \frac{m}{2}\\
      0 & i>\frac{m}{2} \\
   \end{cases},\quad
  q_i=\begin{cases}
      b_i & i\leq \frac{m}{2}\\
      0 & i>\frac{m}{2} \\
   \end{cases}.
\end{eqnarray*}
We have that $P\neq Q$ (as there is at least one instance where $a_i\neq b_i$) and
\begin{eqnarray*}
\tilde{R}=(P+P^\pi)+(Q+Q^\pi).
\end{eqnarray*}
To see that this equality holds, note that by construction the first $\frac{m}{2}$ rows of $(P+P^\pi)+(Q+Q^\pi)$ must equal those of $\tilde{R}$ and so the equality of the remaining rows then follows from the centrosymmetry of both $(P+P^\pi)+(Q+Q^\pi)$ and $\tilde{R}$. The matrices $(P+P^\pi)$ and $(Q+Q^\pi)$ are $m\times n$ centrosymmetric rectangular permutation matrices and are not equal, which completes the proof.
\end{proof}

We illustrate the above lemma with the following examples.

\begin{example} Let $m=n=4$ and
$$R=\begin{pmatrix}1 & 0 & 0 & 0 \\ 1 & 0 & 0 & 0 \\ 0 & 1 & 0 & 0 \\ 0& 0 & 0 & 1\end{pmatrix}$$ which is not centrosymmetric. Then
\begin{eqnarray} \nonumber R+R^\pi&=&\begin{pmatrix}1 & 0 & 0 & 0 \\ 1 & 0 & 0 & 0 \\ 0 & 1 & 0 & 0 \\ 0& 0 & 0 & 1\end{pmatrix}+\begin{pmatrix}1 & 0 & 0 & 0 \\ 0 & 0 & 1& 0 \\ 0 & 0 & 0 & 1 \\ 0& 0 & 0 & 1\end{pmatrix}=\begin{pmatrix}2 & 0 & 0 & 0 \\ 1 & 0 & 1 & 0 \\ 0 & 1 & 0 & 1 \\ 0& 0 & 0 & 2\end{pmatrix}\\
\nonumber &=&\begin{pmatrix}1 & 0 & 0 & 0 \\ 0& 0 & 1 & 0 \\ 0 & 1 & 0 & 0 \\ 0& 0 & 0 & 1\end{pmatrix}+\begin{pmatrix}1 & 0 & 0 & 0 \\ 1 & 0 & 0 & 0 \\ 0 & 0 & 0 & 1 \\ 0& 0 & 0 & 1\end{pmatrix}
\end{eqnarray}
both of which are centrosymmetric stochastic matrices.

\end{example}

\begin{example} Let $m=4$ and $n=5$
$$R=\begin{pmatrix}1 & 0 & 0 & 0 & 0\\ 0& 1 & 0 & 0 & 0 \\ 0 & 1 & 0&0 & 0 \\ 0& 0 & 0 & 1&0\end{pmatrix}$$ which is not centrosymmetric. Then
\begin{eqnarray} \nonumber R+R^\pi&=&\begin{pmatrix}1 & 0 & 0 & 0 & 0\\ 0 & 1 & 0 & 0 & 0\\ 0 & 1 & 0 & 0 & 0\\ 0& 0 & 0 & 1& 0\end{pmatrix}+\begin{pmatrix}0& 1 & 0 & 0 & 0 \\ 0 & 0 & 0& 1& 0 \\ 0 & 0& 0 & 1 & 0 \\ 0& 0& 0 & 0 & 1\end{pmatrix}=\begin{pmatrix}1& 1 & 0 & 0 & 0 \\ 0& 1 & 0 & 1 & 0 \\ 0 & 1 & 0 & 1 & 0 \\ 0& 0 & 0 & 1& 1\end{pmatrix}\\
\nonumber &=&\begin{pmatrix}1 & 0 & 0 & 0 & 0\\ 0& 1 & 0 & 0 & 0 \\ 0 & 0 & 0 & 1 & 0 \\ 0& 0 & 0 & 0 & 1\end{pmatrix}+\begin{pmatrix}0&1 & 0 & 0 & 0 \\ 0 & 0 & 0 & 1 &0 \\ 0 & 1 & 0 & 0&0 \\ 0& 0 & 0 & 1& 0\end{pmatrix}
\end{eqnarray}
both of which are centrosymmetric  stochastic matrices.

\end{example}

Note that if $m$ is odd and $n$ is even, we may have extreme points containing $\frac{1}{2}.$ For example,
$$S=\begin{pmatrix}1&0 & 0 &0 \\ 0& \frac{1}{2} &\frac{1}{2} &0 \\ 0 & 0 & 0 &1\end{pmatrix}$$ is an extreme point of $\Gamma^\pi_{3,4}.$

\begin{lemma}\label{lemeven}
Let both $m$ and $n$ be positive integers with $m$  even ($n$ could be either even or odd). $A\in \Gamma^\pi_{m,n}$ is an extreme point if and only if $A$ is a centrosymmetric rectangular permutation matrix.
\end{lemma}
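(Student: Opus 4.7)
The plan is to prove this characterization by combining Lemmas~\ref{lemext} and~\ref{lemnotcent} in a two-directional argument. The easier direction shows that every centrosymmetric rectangular permutation matrix is extreme in $\Gamma^\pi_{m,n}$, while the main content lies in the converse, where the hypothesis that $m$ is even becomes essential.

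For the forward direction, I would use that a rectangular permutation matrix $A$ is a $(0,1)$-matrix. If $A=\lambda B+(1-\lambda)C$ with $B,C$ stochastic and $\lambda\in(0,1)$, then non-negativity forces both $B$ and $C$ to vanish wherever $A$ does, and the row-sum constraint then forces all remaining entries to match, so $B=C=A$. This argument works in particular when the decomposition takes place inside $\Gamma^\pi_{m,n}$, so any centrosymmetric rectangular permutation matrix is extreme there. (Equivalently, one can invoke the characterization of $\mathcal{E}(\Gamma_{m,n})$ from Section~2.1 and use $\Gamma^\pi_{m,n}\subseteq\Gamma_{m,n}$.)

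For the converse, let $A$ be an extreme point of $\Gamma^\pi_{m,n}$. Lemma~\ref{lemext} supplies a rectangular permutation matrix $R$ with $A=\tfrac{1}{2}(R+R^\pi)$, and I split into two cases. If $R$ is centrosymmetric, then $R^\pi=R$ and $A=R$ is already of the desired form. If $R$ is not centrosymmetric, the evenness of $m$ allows us to invoke Lemma~\ref{lemnotcent}, which yields distinct centrosymmetric rectangular permutation matrices $Q_1,Q_2\in\Gamma^\pi_{m,n}$ with $R+R^\pi=Q_1+Q_2$. Then $A=\tfrac{1}{2}(Q_1+Q_2)$ is a non-trivial convex combination of two distinct elements of $\Gamma^\pi_{m,n}$, contradicting the extremality of $A$. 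Hence this case cannot arise, and $A$ must be a centrosymmetric rectangular permutation matrix.

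The main obstacle has already been absorbed into Lemma~\ref{lemnotcent}; once that splitting is available, the present proof reduces to a clean case analysis. It is worth highlighting where the parity assumption enters: when $m$ is odd, the middle row of $R+R^\pi$ cannot in general be split into two rows belonging to centrosymmetric matrices, which is exactly the failure that permits fractional-entry extreme points such as the matrix $S$ exhibited just before the lemma.
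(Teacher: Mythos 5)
Your proposal is correct and follows essentially the same route as the paper's own proof: the converse direction uses the single-1-per-row structure to force any convex decomposition to be trivial, and the forward direction combines Lemma~\ref{lemext} with the case split on whether $R$ is centrosymmetric, invoking Lemma~\ref{lemnotcent} to contradict extremality in the non-centrosymmetric case. Your closing remark about where the parity of $m$ enters is a nice addition but does not change the substance of the argument.
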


\begin{proof}
First assume  $A\in \Gamma^\pi_{m,n}$ is an extreme point. Then $A=\frac12(R+R^\pi)$ for some rectangular permutation matrix $R$ by Lemma \ref{lemext}. If $R$ is centrosymmetric, then $A=R$ and so $A$ is  a centrosymmetric rectangular permutation matrix. If $R$ is not centrosymmetric, then by Lemma \ref{lemnotcent}, $A=\frac12(R+R^\pi)=\frac12(Q_1+Q_2)$ where $Q_1$ and $Q_2$ are distinct centrosymmetric  rectangular permutation matrices. Thus $A$ can be written as a non-trivial convex combination of centrosymmetric stochastic matrices, so $A$ is not an extreme point, a contradiction. Thus $A$ is a centrosymmetric rectangular permutation matrix.

Conversely, any centrosymmetric rectangular permutation matrix $A$ must be an extreme point of $\Gamma^\pi_{m,n}$: since there is exactly one 1 in each row, the only way to decompose $A$ as a convex combination of matrices in $\Gamma^\pi_{m,n}$ is to have a single non-zero entry in each row in the exact same position as the 1 in each row of $A$. But, since the matrices are stochastic, a single non-zero entry in a row must be 1. Thus any decomposition into a convex combination is trivial.%, so $A$ is an extreme point.
\end{proof}

Lemma~\ref{lemext} gives the general decomposition for all extreme points of $\Gamma_{m,n}^\pi$;  therefore the following theorem characterizes the extreme points of $\Gamma_{m,n}^\pi$.
\begin{theorem}\label{thm:ExtCentroStoc}
Let $m$ and $n$ be positive integers. Let $R$ be an $m\times n$ rectangular permutation matrix. Then $\frac{1}{2}(R+R^\pi)$ is an extreme point of $\Gamma^\pi_{m,n}$ if and only if one of the following:
\begin{enumerate} [(i)]
\item $m$ is even and $R$ is a centrosymmetric rectangular permutation matrix;
\item $m$ is odd and $\tilde{R}$ is an $(m-1)\times n$ centrosymmetric rectangular permutation matrix, where $\tilde{R}$ is the rectangular permutation matrix obtained by removing the center row from $R.$
\end{enumerate}

\end{theorem}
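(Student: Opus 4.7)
The plan is to treat the two parity cases separately, leveraging the preceding lemmas. By Lemma \ref{lemext}, every extreme point of $\Gamma^\pi_{m,n}$ has the form $\tfrac12(R+R^\pi)$ for some rectangular permutation matrix $R$, so it suffices to determine exactly which such $R$ yield an extreme point.

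Case (i), $m$ even, is essentially a repackaging of Lemmas \ref{lemeven} and \ref{lemnotcent}. For the ``if'' direction, if $R$ is centrosymmetric then $\tfrac12(R+R^\pi)=R$ is itself a centrosymmetric rectangular permutation matrix, and Lemma \ref{lemeven} declares it extreme. For the ``only if'' direction, if $R$ is not centrosymmetric, Lemma \ref{lemnotcent} produces distinct centrosymmetric rectangular permutation matrices $Q_1,Q_2$ with $R+R^\pi=Q_1+Q_2$, exhibiting $\tfrac12(R+R^\pi)=\tfrac12(Q_1+Q_2)$ as a non-trivial convex combination in $\Gamma^\pi_{m,n}$, hence not an extreme point.

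Case (ii), $m$ odd, is the step where the real work occurs. The idea is to peel off the center row and reduce to the even case. Write $c=(m+1)/2$ and let $e_k$ be the unit row vector with a $1$ in column $k$. The center row of $A=\tfrac12(R+R^\pi)$ is $\tfrac12(e_j+e_{n+1-j})$ where $j$ is the column containing the $1$ in the center row of $R$; it is either a single unit vector (when $j=n+1-j$) or a vector with two entries equal to $\tfrac12$ located symmetrically. In either case the center row is the \emph{unique} palindromic probability vector supported on $\{j,n+1-j\}$. Consequently, in any decomposition $A=\lambda B+(1-\lambda)C$ with $B,C\in \Gamma^\pi_{m,n}$, nonnegativity forces the center rows of $B$ and $C$ to be supported on $\{j,n+1-j\}$, and then palindromy and row-stochasticity force them to coincide with the center row of $A$. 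Thus deleting the center row yields $\tilde A=\lambda \tilde B+(1-\lambda)\tilde C$ in $\Gamma^\pi_{m-1,n}$, and $\tilde A=\tfrac12(\tilde R+\tilde R^\pi)$.

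With this reduction in hand, both directions of (ii) follow. If $\tilde R$ is centrosymmetric, Lemma \ref{lemeven} (applied to the even dimension $m-1$) shows $\tilde A=\tilde R$ is extreme in $\Gamma^\pi_{m-1,n}$, so $\tilde B=\tilde C=\tilde A$; together with the uniquely determined center row this gives $B=C=A$, so $A$ is extreme. Conversely, if $\tilde R$ is not centrosymmetric, Lemma \ref{lemnotcent} supplies distinct centrosymmetric $(m-1)\times n$ rectangular permutation matrices $\tilde Q_1,\tilde Q_2$ with $\tilde R+\tilde R^\pi=\tilde Q_1+\tilde Q_2$; inserting the center row of $A$ back into each produces distinct $Q_1,Q_2\in \Gamma^\pi_{m,n}$ with $A=\tfrac12(Q_1+Q_2)$, contradicting extremality. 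The main obstacle is the careful analysis of the center row in case (ii), ensuring that the center row is rigidly determined so that the reduction to dimension $m-1$ is genuinely reversible.
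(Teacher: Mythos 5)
Your proof is correct and follows essentially the same route as the paper: case (i) is handled by Lemmas \ref{lemeven} and \ref{lemnotcent}, and case (ii) by deleting the center row, showing that row is rigidly determined in any convex decomposition, and reducing to the even case via Lemma \ref{lemnotcent} (forward direction) and extremality of $\tilde A$ in $\Gamma^\pi_{m-1,n}$ (converse). One small point in your favor: reinserting the palindromic center row of $A$ into both $Q_1$ and $Q_2$ is cleaner than the paper's choice of center rows $\vec r$ and $(\vec r)^\pi$, which need not individually be palindromic and hence need not yield centrosymmetric matrices.
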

\begin{proof}
(i) follows from Lemmas \ref{lemeven} and \ref{lemext}. Thus for $m$ even, $\frac{1}{2}({R}+{R}^\pi)$   is an extreme point of $\Gamma^\pi_{m,n}$ if and only if $R$ is a centrosymmetric rectangular permutation matrix.

For (ii), for $m$ odd, we need to establish that $\frac{1}{2}({R}+{R}^\pi)$   is an extreme point of $\Gamma^\pi_{m,n}$ if and only if $\tilde{R}$ is a centrosymmetric rectangular permutation matrix. Note that when $m$ is odd,  the entries of the center row of $\frac{1}{2}(R+R^\pi)$ can only be $0$, $1$, or $1/2$. Let ${A}=\frac{1}{2}({R}+{R}^\pi)$, $\tilde{A}=\frac{1}{2}(\tilde{R}+\tilde{R}^\pi)$ , $\vec{r}$ be the center row of $R$ and $\vec{a}$ be the center row of $A$. We proceed by proving the contrapositive of the forward direction. Assume that $\tilde{R}$ is not centrosymmetric. Since $\tilde{R}$ has an even number of rows, by Lemma \ref{lemnotcent} there exists $(m-1) \times n$ centrosymmetric rectangular permutation matrices $\tilde{Q}_1$ and $\tilde{Q}_2$ such that $\tilde{A}=\frac{1}{2}(\tilde{R}+\tilde{R}^\pi)=\frac{1}{2}(\tilde{Q}_1+\tilde{Q}_2)$ where $\tilde{Q}_1\neq \tilde{Q}_2$. Now, let $Q_1$ and $Q_2$ be the $m\times n$ centrosymmetric stochastic matrices, with respective center rows $\vec{r}$ and $(\vec{r})^\pi$, such that removing these center rows produces, respectively, the matrices $\tilde{Q}_1$ and $\tilde{Q}_2$. We then have $A=\frac{1}{2}({Q}_1+{Q}_2)$, which is clearly not an extreme point of $\Gamma^\pi_{m,n}$.

Conversely, assume that $\tilde{R}$ is centrosymmetric and hence, by (i), $\tilde{A}$ is an extreme point of $\Gamma^\pi_{(m-1),n}$. Consider now a decomposition of $A$ into a convex combination of linearly independent centrosymmetric stochastic matrices $\{B_1,B_2,\dots,B_z\}$: $A=\sum_{i=1}^z c_iB_i$ for $c_i> 0, \sum_{i=1}^z c_i=1$.   Deleting the center row of each of these matrices would then produce a decomposition of $\tilde{A}$, which must be trivial by part (i), and hence the matrices $\{B_1,B_2,\dots,B_z\}$ are identical up to their respective center rows. Let $\vec{b}_i$ for $i\in[z]$ be the center row for the matrix $B_i$. Since the maximum amount of non-zero entries of $\vec{a}$ is two and all the vectors $\vec{a},\vec{b}_1,\dots,\vec{b}_z$ are centrosymmetric, each vector $\vec{b}_i$ must have the same zero/nonzero pattern as $\vec{a}$. Moreover, since ${A}=\frac{1}{2}({R}+{R}^\pi)$ where $R$ is a rectangular permutation matrix, we note that there is a maximum of two non-zero entries in $\vec{a}$ that must sum to 1, and therefore the values for these entries will be uniquely determined. Hence, since $B_1, \dots, B_z$ are stochastic and have the same zero/nonzero pattern as $A$, it follows that  $\vec{a}=\vec{b}_i=\dots=\vec{b}_z$. Therefore $B_1=\cdots=B_z$ and so every decomposition of $A$ will be trivial.

%For (ii), note that when $m$ is odd,  the entries of the center row of $\frac{1}{2}(R+R^\pi)$ can only be $0$, $1$, or $1/2$. Assuming $\frac{1}{2}(R+R^\pi)$ is an extreme point of $\Gamma^\pi_{m,n}$, the result follows the same argument as in the proof of Lemma \ref{lemeven}. For the converse, again similar to the proof of Lemma \ref{lemeven}, the only non-zero entries of  $\frac{1}{2}(R+R^\pi)$ being $1$ and $1/2$, together with the fact that  $\frac{1}{2}(R+R^\pi)$ is stochastic forces  any decomposition of  $\frac{1}{2}(R+R^\pi)$ into a convex combination of centrosymmetric stochastic matrices to be trivial.
\end{proof}

\section{Bases}
\subsection{A Basis for the Set of Stochastic Matrices}

The set of $m\times n$ stochastic matrices, $\Gamma_{m,n}$, is an affine space, but not a vector space.
For any matrix in $\Gamma_{m,n}$, given $n-1$ entries of a row, the remaining entry is fixed by the constraint of $\sum_{j=1}^n a_{i,j}=1$. Since this occurs for each of the $m$ rows, the dimension of $\Gamma_{m,n}$ is thus $m(n-1)$. Therefore, a basis for  $\Gamma_{m,n}$  contains $m(n-1)+1=mn-m+1$ linearly independent vectors.

For the set of all $n\times n$ doubly stochastic matrices, $\Omega_n$, the additional constraint of $\sum_{i=1}^n a_{i,j}=1$ fixes one entry  for each column as well as each row; the dimension of $\Omega_n$ is thus $(n-1)^2$, and a basis for $\Omega_n$ contains $(n-1)^2+1$ linearly independent vectors. A basis of $n\times n$ permutation matrices is given in \cite{BCD1967}; we outline the construction below, as we make use of this basis when creating a basis (of $n^2-n+1$ elements) for  $\Gamma_n$. This construction does not appear to generalize to $\Gamma_{m,n}$ (where $m\neq n$), as the process leads to too few linearly independent matrices. We provide an alternate construction of a basis $\Gamma_{m,n}$. We single out  the square case as it makes use of the technique in the literature for a construction of a basis for $\Omega_n$.

Following \cite{BCD1967}, we renumber the $(i,j)$-th position of an $l\times l$ matrix as the $[i+(j-i)l](\operatorname{mod} l^2)$-th position. For each integer $1\leq i\leq l^2$, let $A_i\in M_l$ be a $(0,1)$-matrix in which all elements are $0$ except the elements in positions $i, i+1, \dots, i+l-2(\operatorname{mod} l^2)$, which are all 1's. Each $A_i$ is thus ``almost'' a permutation matrix, except it has a row of all zeros. Now, for each integer $1\leq i\leq (n-1)^2$, let $P_i\in \mathcal P_n$ be the unique permutation matrix such that if one deletes the first row and first column  of $P_i$, the submatrix obtained is $A_i\in M_{n-1}$. A consequence of the analysis in \cite{BCD1967} is that $\{P_1, P_2, \dots, P_{(n-1)^2}\}$ is a linearly independent set in $M_n$.  Note that we will only use the renumbering of the positions of a matrix discussed in this paragraph in order to build the $P_i$.

Denote by $C_i \in \Gamma_{m,n}$ the $(0,1)$-matrix in which all elements are $0$ except the elements in the $i$-th column which are all $1$'s.

\begin{proposition}
The matrices
\begin{eqnarray*}
P_1,P_2,\dotsc,P_{(n-1)^2},C_1,C_2,\dotsc,C_n\in M_n
\end{eqnarray*}
are linearly independent.
\end{proposition}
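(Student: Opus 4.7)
The plan is to suppose a linear relation $\sum_{i=1}^{(n-1)^2}\alpha_i P_i + \sum_{j=1}^n\beta_j C_j = 0$ and drive all the coefficients to zero using three inputs: column sums, a single matrix entry, and the known linear independence of $\{P_1,\dots,P_{(n-1)^2}\}$ inherited from \cite{BCD1967}.

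First I would take column sums. The $j$th column of any permutation matrix sums to $1$, while the $j$th column of $C_k$ sums to $n$ if $k=j$ and to $0$ otherwise. Applied to each $j\in[n]$, this forces $\sum_{i=1}^{(n-1)^2}\alpha_i + n\beta_j = 0$, so all the $\beta_j$ share a common value $\beta = -\tfrac{1}{n}\sum_i\alpha_i$.

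Next I would read off the $(1,1)$-entry. Each $A_i\in M_{n-1}$ contains exactly $(n-1)-1 = n-2$ ones, while a permutation matrix in $M_n$ has $n$ ones; since deleting the first row and the first column of $P_i$ must discard exactly two ones to leave $A_i$, the deleted ones must be distinct, which forces $(P_i)_{1,1}=0$ for every $i$. Because $(C_k)_{1,1}$ vanishes unless $k=1$, the $(1,1)$-entry of the relation reads $\beta_1 = 0$, and combining this with the previous step gives $\beta=0$ and hence $\beta_j=0$ for every $j$.

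With all the $\beta_j$ eliminated, the relation collapses to $\sum_i\alpha_i P_i = 0$, and the linear independence of $P_1,\dots,P_{(n-1)^2}$ from the BCD construction cited just before the proposition immediately yields $\alpha_i=0$ for every $i$. The step requiring the most care is the entry-count argument that $(P_i)_{1,1}=0$; once that observation is in hand, the remainder is a short two-line extraction driven purely by the uniformity of column sums of permutation matrices.
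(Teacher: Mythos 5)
Your proof is correct and follows essentially the same route as the paper: column sums force $\sum_i\alpha_i + n\beta_j = 0$ for every $j$ (hence all $\beta_j$ equal), the $(1,1)$-entry forces $\beta_1=0$, and the linear independence of the $P_i$ from \cite{BCD1967} finishes the job. The only difference is that you explicitly justify the claim $(P_i)_{1,1}=0$ via the count of ones in $A_i$, a point the paper simply asserts by stating that $C_1$ is the only matrix in the list with a nonzero $(1,1)$-entry; your entry-count argument for this is valid.
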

\begin{proof}
Suppose that
\begin{eqnarray}\label{linDepRelation}
\sum_{i=1}^{(n-1)^2}\alpha_i P_i + \sum_{i=1}^n \beta_i C_i = O
\end{eqnarray}
where $\alpha_i$,$\beta_i\in\mathbb{R}$ and O is the $n\times n$ zero matrix. It suffices to show that $\alpha_1=\dotsb=\alpha_{(n-1)^2}=\beta_1=\dotsb=\beta_n=0$. To this end, for some $j\in[n]$ consider equation (\ref{linDepRelation}) in terms of the $j$-th column vector for each respective matrix. This now gives a sequence of $n$ relations of the form
\begin{eqnarray}\label{linDepRelationCol}
\sum_{i=1}^{(n-1)^2}\alpha_i (P_i)_j + \beta_j (C_j)_j = (O)_j
\end{eqnarray}
where $(\cdot)_j$ represents of the $j$-th column vector of the corresponding matrix. Note that the summation over the $C$'s is no longer needed as each $C$ matrix only has a single column with non-zero entries. Now, as the vector on the left hand side of equation (\ref{linDepRelationCol}) is the zero vector, the sum of its elements must also be 0. Hence the $n$ relations simplify to
\begin{equation*}
    \sum_{i=1}^{(n-1)^2} \alpha_i + n\beta_j=0.
\end{equation*}
By subtracting various pairs of these relations (for different $j$'s) we get $\beta_1=\beta_2=\dotsb=\beta_n$. But, as $C_1$ is the only matrix with a non-zero number in the $(1,1)$ entry we must have $\beta_1 =0$ and therefore $\beta_j=0$ for every $j\in[n]$. Equation (\ref{linDepRelation}) is simplified to
\begin{eqnarray*}
\sum_{i=1}^{(n-1)^2}\alpha_i P_i = O
\end{eqnarray*}
where it follows that $\alpha_i=0$ for every $i\in\{1,2,\dotsc,(n-1)^2\}$ as $P_1,P_2,\dotsc,P_{(n-1)^2}$ have already been shown to be linearly independent in \cite{BCD1967}.
\end{proof}

Since the matrices $P_1,P_2,\dotsc,P_{(n-1)^2},C_1,C_2,\dotsc,C_n$ are linearly independent stochastic matrices, and there are $n^2-n+1$ of them, the following result is immediate.
\begin{corollary}
The set $\{P_1,P_2,\dotsc,P_{(n-1)^2},C_1,C_2,\dotsc,C_n\}$ is  a basis for the set of stochastic $n\times n$ matrices $\Gamma_{n}$.
\end{corollary}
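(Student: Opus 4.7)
The plan is to treat this as essentially a counting argument leveraging the proposition. The proposition already hands us linear independence of the given matrices, so the substantive work remaining is to (a) verify that each listed matrix actually lies in $\Gamma_n$, (b) compute the cardinality of the set and compare it to the dimension of $\Gamma_n$ as an affine space, and (c) confirm that linear independence is enough to yield a basis in the sense used throughout the section.

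First I would observe membership in $\Gamma_n$: each $P_i$ is a permutation matrix in $\mathcal{P}_n \subseteq \Omega_n \subseteq \Gamma_n$, while each $C_i$ has every row equal to the standard basis vector $e_i^t$, so the row sums are all $1$ and the entries are in $\{0,1\}$, giving $C_i \in \Gamma_n$. Next I would count: there are $(n-1)^2$ matrices of the form $P_i$ and $n$ matrices of the form $C_i$, for a total of
\begin{equation*}
(n-1)^2 + n = n^2 - 2n + 1 + n = n^2 - n + 1.
\end{equation*}
As noted in the paragraph preceding the proposition, the affine space $\Gamma_n$ has dimension $n(n-1) = n^2 - n$, and so an affine basis has exactly $n^2 - n + 1$ elements.

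Finally I would invoke the proposition to conclude linear independence of our $n^2 - n + 1$ matrices. Since linear independence of nonzero vectors implies affine independence, the set is affinely independent of the maximal possible size, and hence spans $\Gamma_n$ as an affine combination. This matches precisely the notion of basis used in the section's opening paragraph (a collection of $mn - m + 1$ linearly independent stochastic matrices whose affine hull is $\Gamma_{m,n}$), so the result follows.

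I do not expect any real obstacle here; the corollary is essentially a repackaging of the proposition once one has checked the dimension count and the membership of the $C_i$ in $\Gamma_n$. The only subtle point worth stating explicitly is the affine-versus-linear distinction, which is harmless because every element of $\Gamma_n$ has row sums $1$ and linear independence of such vectors forces affine independence.
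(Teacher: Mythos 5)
Your proposal is correct and follows essentially the same route as the paper: the paper likewise deduces the corollary immediately from the linear independence established in the proposition together with the count $(n-1)^2+n=n^2-n+1$ matching the size $m(n-1)+1$ of a basis computed at the start of the section. The only difference is that you spell out the membership check and the affine-versus-linear point explicitly, which the paper leaves implicit.
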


We now describe a method for constructing a basis for the more general setting of  $\Gamma_{m,n}$.
For each $i\in [m]$ and $j\in [n-1]$, denote by $B_{i,j}$  the $(0,1)$ matrix whose    $(i,j)$ entry  is $1$,  all entries in the $(j+1)$-th column are $1$ except the $(i,j+1)$ entry which is 0, and all other entries are 0.
\begin{example}
In $\Gamma_{5,3}$ there are 10 such $B$ matrices, which are listed below.
\[
B_{1,1}=\begin{pmatrix}1 & 0 & 0 \\ 0 & 1 & 0  \\ 0 & 1 & 0  \\ 0 & 1 & 0 \\ 0 & 1 & 0 \end{pmatrix}\quad
B_{2,1}=\begin{pmatrix}0 & 1 & 0 \\ 1 & 0 & 0  \\ 0 & 1 & 0  \\ 0 & 1 & 0 \\ 0 & 1 & 0 \end{pmatrix}\quad
B_{3,1}=\begin{pmatrix}0 & 1 & 0 \\ 0 & 1 & 0  \\ 1 & 0 & 0  \\ 0 & 1 & 0 \\ 0 & 1 & 0 \end{pmatrix}\quad
B_{4,1}=\begin{pmatrix}0 & 1 & 0 \\ 0 & 1 & 0  \\ 0 & 1 & 0  \\ 1 & 0 & 0 \\ 0 & 1 & 0 \end{pmatrix}\quad
B_{5,1}=\begin{pmatrix}0 & 1 & 0 \\ 0 & 1 & 0  \\ 0 & 1 & 0  \\ 0 & 1 & 0 \\ 1 & 0 & 0 \end{pmatrix}
\]\[
B_{1,2}=\begin{pmatrix}0 & 1 & 0 \\ 0 & 0 & 1  \\ 0 & 0 & 1  \\ 0 & 0 & 1 \\ 0 & 0 & 1 \end{pmatrix}\quad
B_{2,2}=\begin{pmatrix}0 & 0 & 1 \\ 0 & 1 & 0  \\ 0 & 0 & 1  \\ 0 & 0 & 1 \\ 0 & 0 & 1 \end{pmatrix}\quad
B_{3,2}=\begin{pmatrix}0 & 0 & 1 \\ 0 & 0 & 1  \\ 0 & 1 & 0  \\ 0 & 0 & 1 \\ 0 & 0 & 1 \end{pmatrix}\quad
B_{4,2}=\begin{pmatrix}0 & 0 & 1 \\ 0 & 0 & 1  \\ 0 & 0 & 1  \\ 0 & 1 & 0 \\ 0 & 0 & 1 \end{pmatrix}\quad
B_{5,2}=\begin{pmatrix}0 & 0 & 1 \\ 0 & 0 & 1  \\ 0 & 0 & 1  \\ 0 & 0 & 1 \\ 0 & 1 & 0 \end{pmatrix}
\]
\end{example}
\begin{theorem}\label{linDepOfB}
The matrices
\[B_{1,1},\dotsc,B_{m,(n-1)},C_n\in M_{m,n}\]
are linearly independent.
\end{theorem}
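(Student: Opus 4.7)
The plan is to assume a linear dependence
\[
\sum_{i=1}^{m}\sum_{j=1}^{n-1}\alpha_{i,j}B_{i,j}+\beta C_n=O
\]
and force every coefficient to be zero by inspecting entries column-by-column from left to right. The key observation is that each $B_{i,j}$ has nonzero entries only in columns $j$ and $j+1$ (a single $1$ at position $(i,j)$, together with $m-1$ ones filling the rest of column $j+1$), while $C_n$ is supported in column $n$ alone. This narrow support imposes a lower-triangular structure on the coefficient system when the matrices are ordered by column, which is exactly what makes a straightforward induction work.

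For the base case, I would examine column $1$. The only matrices in our collection contributing to column $1$ are $B_{1,1},\dots,B_{m,1}$, each contributing a single $1$ at $(i,1)$; in particular, no $B_{k,l}$ with $l\ge 2$ and no multiple of $C_n$ touches column $1$. Therefore the $(i,1)$ entry of the linear combination equals $\alpha_{i,1}$, so $\alpha_{i,1}=0$ for every $i\in[m]$.

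Next, I would induct on $j$. Assuming $\alpha_{k,l}=0$ for all $k\in[m]$ and all $l<j$, where $2\le j\le n-1$, I observe that a matrix $B_{k,l}$ contributes a nonzero entry at position $(i,j)$ only when either $(k,l)=(i,j)$, or $l=j-1$ and $k\neq i$. Hence the $(i,j)$ entry of the sum is $\alpha_{i,j}+\sum_{k\neq i}\alpha_{k,j-1}$, which collapses to $\alpha_{i,j}$ by the inductive hypothesis, yielding $\alpha_{i,j}=0$. Once all $\alpha_{k,n-1}$ are known to vanish, I would finish by looking at column $n$: only the matrices $B_{k,n-1}$ for $k\neq i$ and $C_n$ have entries there, so the $(i,n)$ entry equals $\sum_{k\neq i}\alpha_{k,n-1}+\beta=\beta$, giving $\beta=0$.

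The main obstacle is minor: this is essentially a triangular-system argument and requires no deep step. The only bookkeeping subtleties are choosing column $1$ as a clean base case (which works precisely because no $B_{k,l}$ with $l\ge 2$ touches column $1$) and isolating $\beta$ at the very end using column $n$, where $C_n$ is the unique matrix that escaped the induction.
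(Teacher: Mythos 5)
Your proof is correct and follows essentially the same strategy as the paper's: a left-to-right induction over columns exploiting that $B_{i,j}$ is supported only in columns $j$ and $j+1$, so each coefficient $\alpha_{i,j}$ is isolated at entry $(i,j)$ once the previous column's coefficients vanish, with $\beta$ recovered last from column $n$. The only cosmetic difference is that you write out the entry $(i,j)$ of the sum explicitly as $\alpha_{i,j}+\sum_{k\neq i}\alpha_{k,j-1}$, whereas the paper phrases the same fact as ``$B_{i,j}$ is the only remaining matrix with a $1$ in the $(i,j)$ entry.''
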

\begin{proof}
Suppose that
\begin{eqnarray}\label{linDepRelationB}
\sum_{i=1}^{m}\sum_{j=1}^{n-1}\beta_{i,j} B_{i,j} + \gamma_n C_n = O
\end{eqnarray}
where $\beta_{i,j}$,$\gamma_n\in\mathbb{R}$ and O is the $m\times (n-1)$ zero matrix. It suffices to show that $\beta_{1,1}=\dotsb=\beta_{m,(n-1)}=\gamma_n=0$. We proceed by strong finite induction on $j$. For $j=1$ we wish to show that $\beta_{1,1}=\beta_{2,1}=\dotsc=\beta_{m,1}=0$. By construction of the $B$ matrices, the only matrix in the collection $\{B_{1,1},\dotsc,B_{m,(n-1)},C_n\}$ with a $1$ in the $(1,1)$ entry is the matrix $B_{1,1}$. It follows immediately from equation (\ref{linDepRelationB}) that $\beta_{1,1}=0$. Similarly, by considering the matrices $B_{2,1},\dotsc,B_{m,1}$ we get that $\beta_{2,1}=\dotsb=\beta_{m,1}=0$. Now, let $k\in[n-2]$ and assume that the hypothesis holds for $j\in [k]$ (i.e.\ that $\beta_{1,1}=\beta_{2,1}=\dotsb=\beta_{m,1}=\beta_{1,2}=\beta_{2,2}=\dotsb=\beta_{m,2}=\dotsb=\beta_{m,k}=0$). Equation (\ref{linDepRelationB}) simplifies to
\begin{eqnarray}
\sum_{i=1}^{m}\sum_{j=k+1}^{n-1}\beta_{i,j} B_{i,j} + \gamma_n C_n = O.
\end{eqnarray}
By noting that $B_{1,(k+1)}$ is the only matrix in the collection $\{B_{(k+1),1},\dotsc,B_{m,(n-1)},C_n\}$ with a $1$ in the $(1, k+1)$ entry, $B_{2,(k+1)}$ is the only matrix with a $1$ in the $(2, k+1)$ entry, and so on, we get $\beta_{1,(k+1)}=\beta_{2,(k+1)}=\dotsb=\beta_{m,(k+1)}=0$. Therefore $\beta_{i,j}=0$ for every $i\in[m]$ and $j\in[n-1]$. In which case $\gamma_n$ must also be 0.
\end{proof}
 Since the matrices $B_{1,1},\dotsc,B_{m,(n-1)},C_n$ are linearly independent stochastic matrices, and there are $m(n-1)+1$ of them, the following result is immediate.
\begin{corollary}
The set $\{B_{1,1},\dotsc,B_{m,(n-1)},C_n\}$ is  a basis for the set of stochastic $m\times n$ matrices $\Gamma_{m,n}$.
\end{corollary}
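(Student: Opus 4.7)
The plan is to observe that this corollary follows almost immediately by combining Theorem \ref{linDepOfB} with the dimension count established at the start of this subsection, so the proof proposal is essentially a bookkeeping argument with one mild subtlety about the distinction between linear and affine independence.

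First I would verify that every matrix in the set $\{B_{1,1},\dotsc,B_{m,(n-1)},C_n\}$ is itself an element of $\Gamma_{m,n}$: each $B_{i,j}$ has a single $1$ in each row (namely, the off-diagonal entries were chosen precisely so the row sums equal $1$), and $C_n$ has a single $1$ in each row as well. Next, I would count: the index $i$ ranges over $[m]$ and $j$ over $[n-1]$, giving $m(n-1)$ matrices of the form $B_{i,j}$, plus $C_n$, for a total of $m(n-1)+1 = mn-m+1$ matrices. This matches the number established earlier in the section as the size of a basis for $\Gamma_{m,n}$.

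Then I would invoke Theorem \ref{linDepOfB} directly to conclude that this collection is linearly independent. Since $\Gamma_{m,n}$ is an affine space of dimension $m(n-1)$, and since linearly independent vectors lying in an affine subspace are in particular affinely independent, having $m(n-1)+1$ linearly independent elements of $\Gamma_{m,n}$ forces their affine hull to be all of $\Gamma_{m,n}$. Therefore the set is a basis.

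The only place where one needs to be a bit careful is reconciling the ambient linear independence proved in Theorem \ref{linDepOfB} with the affine structure of $\Gamma_{m,n}$; but since the counting already matches and the matrices lie in the relevant affine hyperplane, no further work is required, and this explains why the authors call the conclusion immediate.
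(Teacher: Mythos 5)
Your proposal is correct and follows essentially the same route as the paper, which also deduces the corollary immediately from Theorem~\ref{linDepOfB} together with the count of $m(n-1)+1$ linearly independent stochastic matrices against the dimension $m(n-1)$ of the affine space $\Gamma_{m,n}$. Your extra remark that linear independence implies affine independence, so that $m(n-1)+1$ such elements must affinely span $\Gamma_{m,n}$, is exactly the bookkeeping the paper leaves implicit.
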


\subsection{A Basis for the set of Centrosymmetric Stochastic Matrices}

A basis for the real vector space generated by the $n\times n$  centrosymmetric permutation matrices, for even $n$,  was given in \cite[Theorem 14]{BC2018}. The basis was built using a block structure. In light of this and the previous section, we provide a basis for the set of centrosymmetric stochastic matrices.

Recall we denote by $\Gamma^\pi_{m,n}$ the set of centrosymmetric $m\times n$ stochastic matrices. If $m$ is even, and so $m=2k$ for $k\in\mathbb{N}$, then $\Gamma^\pi_{m,n}$ has dimension $\frac{m}{2}(n-1)=k(n-1).$ Let $\mathcal{B}=\{B_1, B_2, \ldots, B_{k(n-1)+1}\}$ be a basis of $\Gamma_{k,n}$.

\begin{theorem}
If $m$ is even, then the collection $\hat{\mathcal{B}}=\{\hat{B}_1, \hat{B}_2,\ldots, \hat{B}_{k(n-1)+1}\}\subset M_{m,n}$, where
\[\hat{B}_i=\begin{pmatrix}B_i \\ B_i^\pi \end{pmatrix},\]
is a basis for $\Gamma^\pi_{m,n}$.
\end{theorem}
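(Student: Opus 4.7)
The plan is to verify the three standard conditions: membership in $\Gamma^\pi_{m,n}$, linear independence, and spanning, with a count check confirming it is the right size.

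First I would check that each $\hat{B}_i$ actually lies in $\Gamma^\pi_{m,n}$. Since $B_i \in \Gamma_{k,n}$, its rows sum to $1$; the $\pi$ operation reverses rows and their order, so $B_i^\pi$ is also row-stochastic, making $\hat{B}_i$ an $m \times n$ stochastic matrix. For centrosymmetry, one observes that applying $\pi$ to the full block matrix $\begin{pmatrix} B_i \\ B_i^\pi \end{pmatrix}$ yields $\begin{pmatrix} (B_i^\pi)^\pi \\ B_i^\pi \end{pmatrix} = \begin{pmatrix} B_i \\ B_i^\pi \end{pmatrix}$, so $\hat{B}_i = \hat{B}_i^\pi$.

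Next, the key structural observation: every $A \in \Gamma^\pi_{m,n}$ with $m=2k$ decomposes as $A = \begin{pmatrix} A_{\text{top}} \\ A_{\text{bot}} \end{pmatrix}$, and the centrosymmetry condition $a_{i,j} = a_{m+1-i,\,n+1-j}$ forces $A_{\text{bot}} = (A_{\text{top}})^\pi$, while $A_{\text{top}}$ can be any element of $\Gamma_{k,n}$. This gives an affine isomorphism $\Gamma^\pi_{m,n} \cong \Gamma_{k,n}$, which in particular confirms that $\dim \Gamma^\pi_{m,n} = k(n-1)$ and thus the count $|\hat{\mathcal{B}}| = k(n-1)+1$ matches the expected basis size.

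For linear independence, I would suppose $\sum_{i=1}^{k(n-1)+1} c_i \hat{B}_i = O$ and read off just the top $k$ rows to obtain $\sum_i c_i B_i = O$ in $M_{k,n}$; since $\mathcal{B}$ is a basis of $\Gamma_{k,n}$, all $c_i = 0$. For spanning, given any $A \in \Gamma^\pi_{m,n}$, I would write $A_{\text{top}} = \sum_i c_i B_i$ using the basis $\mathcal{B}$ (with $\sum_i c_i = 1$, inherited from stochasticity). Applying the $\pi$ operation to both sides gives $(A_{\text{top}})^\pi = \sum_i c_i B_i^\pi$ by linearity of $\pi$, so stacking yields $A = \sum_i c_i \hat{B}_i$. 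Combined with independence and the correct cardinality, this shows $\hat{\mathcal{B}}$ is a basis.

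I do not expect any genuine obstacle here; the only subtle point is checking that the $\pi$ operation behaves linearly and that the bottom block of a centrosymmetric matrix is exactly $(A_{\text{top}})^\pi$ as computed for a $k \times n$ matrix (rather than, say, a reflection in the full $m \times n$ sense), and this is a one-line index verification.
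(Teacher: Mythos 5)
Your proof is correct and follows the same route as the paper, whose own proof is a single line ("this follows immediately from the matrices in $\mathcal{B}$ being linearly independent") resting implicitly on exactly the observation you make explicit: for $m=2k$ even, centrosymmetry forces the bottom block to be $(A_{\mathrm{top}})^\pi$, so restriction to the top $k$ rows gives an affine isomorphism $\Gamma^\pi_{m,n}\cong\Gamma_{k,n}$ under which $\hat{B}_i\mapsto B_i$. You simply supply the membership, independence, and spanning details that the paper leaves to the reader.
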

\begin{proof}
This follows immediately from the matrices in the collection $\mathcal{B}$ being linearly independent.
\end{proof}
On the other hand, consider if $m$ is odd, and so $m=2k+1$ for $k\in\mathbb{N}$.
The dimension of $\Gamma^\pi_{m,n}$ can be expressed as $(\frac{m-1}{2})(n-1)+\lceil\frac{n}{2}\rceil-1.$ There exists $l\in\mathbb{N}$ such that $n=2l$ or $n=2l+1$ depending on whether $n$ is, respectively, even or odd. In the case where $n$ is even the dimension of $\Gamma^\pi_{m,n}$  simplifies to $k(n-1)+l-1$, and for $n$ odd it simplifies to $k(n-1)+l.$ %Now, define $D_i$, for $1\leq i\leq l-1$, to be the $m\times n$ centrosymmetric stochastic matrix where every entry in the $i$-th and $(n-i+1)$-th columns are $\frac{1}{2}$ and all other entries are 0.
We recall $C_i$ are stochastic $(0,1)$-matrices having all entries zero except all ones in the $i$th column. In Theorem \ref{basisOddCentro} below, the $C_i$ are of size $k\times n$.

%\begin{theorem}\label{basisOddCentro}
%If $m$ is odd, the collection $\tilde{\mathcal{B}}=\{\tilde{B}_1, \tilde{B}_2,\ldots, %\tilde{B}_{k(n-1)+1}, D_1,D_2,\dotsc,D_{\lceil\frac{n}{2}\rceil-1}\}\subset M_{m,n}$ is a basis for %$\Gamma^\pi_{m,n}$, where if $n$ is even the $\tilde{B}_i$ are defined to be
%\[\tilde{B}_i=\begin{pmatrix}B_i \\ \vec{d_l} \\B_i^\pi \end{pmatrix},\]
%where $\vec{d_l}$ is the $n$-dimensional row vector with $\frac{1}{2}$ in the $l$ and $(l+1)$-th entries %and all other entries 0. If instead $n$ is odd, then the $\tilde{B}_i$ are defined to be
%\[\tilde{B}_i=\begin{pmatrix}B_i \\ \vec{e}_{l+1} \\B_i^\pi \end{pmatrix},\]
%where $\vec{e}_{l+1}$ is the $n$-dimensional unit row vector with a 1 in the $(l+1)$-th entry.
%\end{theorem}

\begin{theorem}\label{basisOddCentro}
If $m$ is odd, the collection $\tilde{\mathcal{B}}=\{\tilde{B}_1, \tilde{B}_2,\ldots, \tilde{B}_{k(n-1)+1}, \tilde{C}_1, \tilde{C}_2,\dotsc, \tilde{C}_{\lceil\frac{n}{2}\rceil-1}\}\subset M_{m,n}$ is a basis for $\Gamma^\pi_{m,n}$, where the $\tilde{C}_i$ are defined to be
\[\tilde{C}_i=\begin{pmatrix}C_i \\ \vec{d}_i \\C_i^\pi \end{pmatrix},\]where $\vec{d}_i$ is an $n$-dimensional row vector with $\frac{1}{2}$ in the $i$ and $(n-i+1)$-th entries and all other entries 0. If $n$ is even, the $\tilde{B}_i$ are defined to be
\[\tilde{B}_i=\begin{pmatrix}B_i \\ \vec{d}_{\frac{n}2} \\B_i^\pi \end{pmatrix}.\]
 If instead $n$ is odd, then the $\tilde{B}_i$ are defined to be
\[\tilde{B}_i=\begin{pmatrix}B_i \\ \vec{e}_{\lceil\frac{n}2\rceil} \\B_i^\pi \end{pmatrix},\]
where $\vec{e}_{\lceil\frac{n}2\rceil} $ is the $n$-dimensional unit row vector with a 1 in the $\lceil\frac{n}2\rceil$-th entry (the $(l+1)$-th entry for $n=2l+1)$.
\end{theorem}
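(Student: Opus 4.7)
The plan is to establish the theorem in three steps: (i) verify that every matrix in $\tilde{\mathcal{B}}$ lies in $\Gamma^\pi_{m,n}$, (ii) confirm that $|\tilde{\mathcal{B}}| = \dim \Gamma^\pi_{m,n} + 1$ in each parity case for $n$, and (iii) prove linear independence. Steps (i) and (ii) together reduce the theorem to a single linear-independence check.

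For (i), any matrix of the block form $\bigl(\begin{smallmatrix} A \\ \vec{b} \\ A^\pi \end{smallmatrix}\bigr)$ is centrosymmetric whenever $\vec{b} = \vec{b}^\pi$; both $\vec{d}_j$ (which is symmetric about the center by construction) and $\vec{e}_{\lceil n/2 \rceil}$ satisfy this. Stochasticity is inherited from $B_i$ and $C_i$ in the top block, from rotation invariance of the row sums in the bottom block, and from the fact that each middle-row vector sums to $1$. For (ii), if $n = 2l$ then $|\tilde{\mathcal{B}}| = k(n-1) + 1 + (l-1) = k(n-1) + l = \dim\Gamma^\pi_{m,n} + 1$, and if $n = 2l+1$ then $|\tilde{\mathcal{B}}| = k(n-1) + 1 + l = k(n-1) + l + 1 = \dim\Gamma^\pi_{m,n} + 1$.

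The heart of the argument is (iii). I would start from a putative dependence
\[\sum_{i=1}^{k(n-1)+1}\alpha_i \tilde{B}_i + \sum_{j=1}^{\lceil n/2\rceil-1}\gamma_j \tilde{C}_j = O\]
and read off the middle row. Every $\tilde{B}_i$ has the same middle-row vector $\vec{v}$: equal to $\vec{d}_{n/2}$ (supported on $\{n/2, n/2+1\}$) when $n$ is even, and to $\vec{e}_{l+1}$ (supported on $\{l+1\}$) when $n$ is odd. For each $j < \lceil n/2\rceil$, the vector $\vec{d}_j$ is supported on $\{j, n+1-j\}$; the cutoff $j < \lceil n/2\rceil$ forces these supports to be pairwise disjoint and disjoint from the support of $\vec{v}$. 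Evaluating the middle-row relation at position $j$ therefore isolates the single term $\tfrac{1}{2}\gamma_j = 0$, so every $\gamma_j$ vanishes. With the $\gamma_j$ gone, restricting the remaining equation to the top $k$ rows yields $\sum_i \alpha_i B_i = O$ in $M_{k,n}$, and since $\mathcal{B}$ is a basis of $\Gamma_{k,n}$ its elements are linearly independent, forcing every $\alpha_i = 0$. The bottom $k$ rows then add no new information by centrosymmetry.

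The main obstacle, such as it is, is purely organizational: one must treat the two parities of $n$ in parallel and verify that the particular middle-row vector grafted onto each $\tilde{B}_i$ has support disjoint from every $\vec{d}_j$ appearing in the $\tilde{C}_j$'s. The definition of $\tilde{\mathcal{B}}$---the index cutoff $\lceil n/2\rceil - 1$ for the $\tilde{C}_j$'s and the parity-sensitive choice between $\vec{d}_{n/2}$ and $\vec{e}_{\lceil n/2 \rceil}$ for the $\tilde{B}_i$ middle row---is engineered precisely to secure this disjointness, after which the argument is essentially a one-line reduction to the linear independence of $\mathcal{B}$.
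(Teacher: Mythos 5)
Your proof is correct and follows essentially the same route as the paper's: the $\tilde{C}_j$ are eliminated first because each is the unique member of $\tilde{\mathcal{B}}$ with a nonzero entry in position $(k+1,j)$ (your support-disjointness check makes this explicit), after which linear independence reduces to that of the $B_i$ in $\mathcal{B}$. Your write-up is more detailed than the paper's—in particular the explicit verification of centrosymmetry, stochasticity, and the dimension count—but the underlying argument is identical.
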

\begin{proof}
Regardless of whether $n$ is even or odd it follows that the $\tilde{B}_i$'s are linearly independent as the matrices in the collection $\mathcal{B}$ are linearly independent. Additionally, $\tilde{C}_1$ is the only matrix in $\tilde{\mathcal{B}}$ with a non-zero value in the $(k+1,1)$ entry, $\tilde{C}_2$ is the only matrix with a non-zero value in the $(k+1,2)$ entry, and so forth. Therefore all of the matrices in $\tilde{\mathcal{B}}$ are linearly independent and hence form a basis.
\end{proof}

\begin{example}\label{ex:basis}
Using a basis for $\Gamma_{2,4}$ as described in theorem \ref{linDepOfB} we get the following basis for $\Gamma^\pi_{5,4}$:
\[
\begin{pmatrix}1 & 0 & 0 & 0 \\ 0 & 1 & 0 & 0\\ 0 & 0.5 & 0.5 & 0  \\ 0 & 0 & 1 & 0 \\ 0 & 0 & 0 & 1 \end{pmatrix},\quad
\begin{pmatrix}0 & 1 & 0 & 0 \\ 1 & 0 & 0 & 0\\ 0 & 0.5 & 0.5 & 0  \\ 0 & 0 & 0 & 1 \\ 0 & 0 & 1 & 0 \end{pmatrix},\quad
\begin{pmatrix}0 & 1 & 0 & 0 \\ 0 & 0 & 1 & 0\\ 0 & 0.5 & 0.5 & 0  \\ 0 & 1 & 0 & 0 \\ 0 & 0 & 1 & 0 \end{pmatrix},\quad
\begin{pmatrix}0 & 0 & 1 & 0 \\ 0 & 1 & 0 & 0\\ 0 & 0.5 & 0.5 & 0  \\ 0 & 0 & 1 & 0 \\ 0 & 1 & 0 & 0 \end{pmatrix},\quad
\]\[
\begin{pmatrix}0 & 0 & 1 & 0 \\ 0 & 0 & 0 & 1\\ 0 & 0.5 & 0.5 & 0  \\ 1 & 0 & 0 & 0 \\ 0 & 1 & 0 & 0 \end{pmatrix},\quad
\begin{pmatrix}0 & 0 & 0 & 1 \\ 0 & 0 & 1 & 0\\ 0 & 0.5 & 0.5 & 0  \\ 0 & 1 & 0 & 0 \\ 1 & 0 & 0 & 0 \end{pmatrix},\quad
\begin{pmatrix}0 & 0 & 0 & 1 \\ 0 & 0 & 0 & 1\\ 0 & 0.5 & 0.5 & 0  \\ 1 & 0 & 0 & 0 \\ 1 & 0 & 0 & 0 \end{pmatrix},\quad
\begin{pmatrix}0 & 0 & 0 & 1 \\ 0 & 0 & 0 & 1\\ 0.5 & 0 & 0 & 0.5  \\ 1 & 0 & 0 & 0 \\ 1 & 0 & 0 & 0 \end{pmatrix}\quad
\]
\end{example}
\section{Graphs Associated to these Matrices}

Given a  matrix $A\in \Gamma_{m,n}$, consider the corresponding $(0,1)$-matrix $B$ having the same zero/non-zero pattern, and construct the bipartite graph associated to $B$ (such a graph has two vertex sets, one corresponding to the rows $R=\{r_1, \dots, r_m\}$ and the other corresponding to the columns $S=\{s_1, \dots, s_n\}$; an edge connects $r_i$ and $s_j$ if and only if $b_{i,j}=1$, i.e.\ $a_{i,j}\neq 0$); in this way, $B$ can be seen as the biadjacency matrix associated to the constructed bipartite graph. We  note that the bipartite graph corresponding to a   centrosymmetric stochastic matrix  $A\in \Gamma_{m,n}^\pi$ is centrosymmetric: there is an edge between vertices $r_i$ and $s_j$ if and only if there is an edge between vertices $r_{m+1-i}$ and $s_{n+1-j}$, for all $i\in [m], j\in[n]$.

The following is an immediate corollary of Theorem 1.
\begin{corollary}\label{cor:extStoc} $P$ is an extreme point of $\Gamma_{m,n}$ if and only if the bipartite graph associated with $P$ satisfies both of the following.
\begin{enumerate} [(i)]
\item It is a forest.
\item The degree of all row vertices is $1$ or, equivalently, all row vertices are leaves.
\end{enumerate}
\end{corollary}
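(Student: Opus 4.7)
The plan is to reduce everything to Theorem 1, which asserts that the extreme points of $\Gamma_{m,n}$ are precisely the rectangular permutation matrices, i.e. the $(0,1)$-matrices with exactly one $1$ per row. Given that characterization, the task becomes the purely combinatorial one of translating the matrix condition ``exactly one $1$ per row'' into the graph conditions (i) and (ii), using the bipartite correspondence recalled just before the corollary.

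For the forward direction, I would start by assuming $P$ is an extreme point, so by Theorem 1, $P$ is a rectangular permutation matrix. Then every row has exactly one nonzero entry, which says exactly that every row vertex $r_i$ of the associated bipartite graph is incident to exactly one column vertex $s_j$; that is, $\deg(r_i)=1$, giving (ii). To obtain (i), I would argue that any cycle in a bipartite graph must alternate between row and column vertices and therefore must pass through some row vertex using two incident edges. Since each row vertex has degree $1$, no such cycle can exist, so the graph is a forest.

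For the reverse direction, I would use only condition (ii): if every row vertex is a leaf, then every row of $P$ contains exactly one nonzero entry, and stochasticity ($\sum_j a_{i,j}=1$) forces that entry to equal $1$. Hence $P$ is a rectangular permutation matrix, and Theorem 1 then identifies it as an extreme point of $\Gamma_{m,n}$.

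I do not expect a real obstacle here; indeed the argument above shows that in the stochastic setting condition (i) is automatic from (ii), so the forest hypothesis is only needed as a natural parallel to the doubly stochastic Birkhoff picture and as preparation for subsequent results. The only thing to be slightly careful about is invoking stochasticity at the right place in the reverse direction to upgrade ``exactly one nonzero entry per row'' to ``exactly one $1$ per row,'' so that Theorem 1 applies.
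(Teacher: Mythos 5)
Your proposal is correct and follows the same route as the paper, which states the corollary as an immediate consequence of Theorem 1 without writing out a proof; your argument simply supplies the routine translation (one nonzero entry per row $\Leftrightarrow$ every row vertex is a leaf, with stochasticity upgrading the entry to $1$). Your side observation that condition (i) is automatic from (ii) in this setting is also accurate and consistent with the paper's remark that no path can be longer than $2$.
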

Note that item (ii) of Corollary~\ref{cor:extStoc} implies that the sum of degrees of all column vertices  is $m$, and there is no path longer than 2.

For the extreme points of centrosymmetric stochastic matrices, we have the following immediate corollary of Theorem~\ref{thm:ExtCentroStoc}.
\begin{corollary}\label{cor:extCentroStoc} Let $m$ be even. $P$ is an extreme point of $\Gamma_{m,n}^\pi$ if and only if the bipartite graph associated with $P$ satisfies (i) and (ii) of Corollary~\ref{cor:extStoc} above. Let $m$ be odd. $P$ is an extreme point of $\Gamma_{m,n}^\pi$ if and only if the bipartite graph associated with $P$ satisfies both of the following.
\begin{enumerate} [(i)]
\item It is a forest.
\item The degree of the middle row vertex $r_{k+1}$ where $m=2k+1$ can be either 1 or 2; the degree of all other row vertices is $1$.
\end{enumerate}
\end{corollary}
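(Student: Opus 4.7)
The plan is to derive this corollary as a direct translation of Theorem \ref{thm:ExtCentroStoc} into graph-theoretic language. In both parities, Theorem \ref{thm:ExtCentroStoc} already dictates how many non-zero entries each row of an extreme point carries; all that remains is to read these numbers off as row-vertex degrees and to observe that the forest condition then comes essentially for free.

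For $m$ even, Theorem \ref{thm:ExtCentroStoc}(i) characterizes extreme points of $\Gamma_{m,n}^\pi$ as centrosymmetric rectangular permutation matrices: each row has a single non-zero entry, equal to $1$, so every row vertex in the associated bipartite graph is a leaf, which is condition (ii). Condition (i) then comes automatically, since a bipartite graph in which every row vertex has degree $1$ is a disjoint union of column-centred stars, hence a forest. Conversely, (ii) forces each row of $P$ to have a single non-zero entry, which by stochasticity equals $1$; combined with the centrosymmetry built into $P \in \Gamma_{m,n}^\pi$, this exhibits $P$ as a centrosymmetric rectangular permutation matrix, hence as an extreme point.

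For $m$ odd with $m=2k+1$, Theorem \ref{thm:ExtCentroStoc}(ii) states that $P=\tfrac{1}{2}(R+R^\pi)$ where $\tilde R$, obtained by removing the centre row of $R$, is a centrosymmetric rectangular permutation matrix. Each non-centre row of $P$ therefore has a unique non-zero entry equal to $1$, making its row vertex a leaf. The centre row of $P$ is the average of a unit row vector (the centre row of $R$, with its $1$ in some position $j$) and its centrosymmetric reversal: if $j=\lceil n/2\rceil$, which is possible only when $n$ is odd, the two rows coincide and $r_{k+1}$ has degree $1$; otherwise the centre row of $P$ has two entries equal to $1/2$ at positions $j$ and $n+1-j$, giving $r_{k+1}$ degree $2$. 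The forest property is again automatic: any bipartite cycle has at least two distinct row vertices, each of degree $\geq 2$, but in our setting only $r_{k+1}$ can have degree exceeding $1$. For the converse, (i) and (ii) force every non-centre row of $P$ to be a standard basis vector, and the centrosymmetric centre row to consist of either a single $1$ at the middle column (possible only when $n$ is odd) or two $1/2$'s at mirror positions; constructing $R$ to agree with $P$ off the centre row and to place a single $1$ at either supporting position in the centre row yields a rectangular permutation matrix with $\tilde R$ centrosymmetric and $P=\tfrac{1}{2}(R+R^\pi)$, so Theorem \ref{thm:ExtCentroStoc} applies.

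The only genuinely delicate step is the bijection, in the $m$ odd case, between the two admissible degrees of $r_{k+1}$ and the two possible shapes of the centre row of $\tfrac{1}{2}(R+R^\pi)$, together with the fact that either of the two $1/2$-positions may serve as the $1$ in the centre row of $R$. Once this small piece of bookkeeping is set out, the remainder of the argument is a routine dictionary between the structural conclusions of Theorem \ref{thm:ExtCentroStoc} and the degree/forest conditions on the associated bipartite graph.
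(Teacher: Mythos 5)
Your proposal is correct and follows the same route the paper intends: the paper states this corollary as an immediate consequence of Theorem~\ref{thm:ExtCentroStoc} (together with Lemma~\ref{lemext}) and offers no further proof, and your argument is simply a careful write-up of that translation, including the correct handling of the degree-1 versus degree-2 middle row vertex and the observation that the forest condition follows since at most one row vertex can have degree exceeding one.
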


Note that item (ii) of Corollary~\ref{cor:extCentroStoc} implies that the sum of degrees of all column vertices  is $m$ (if the degree of $r_{k+1}$ is 1) or $m+1$ (if the degree of $r_{k+1}$ is 2), and
there is no path longer than 4.

We illustrate Corollary~\ref{cor:extCentroStoc} with the following example.
\begin{example}
Consider the basis element and extreme point$$\begin{pmatrix}0 & 0 & 0 & 1 \\ 0 & 0 & 0 & 1\\ 0.5 & 0 & 0 & 0.5  \\ 1 & 0 & 0 & 0 \\ 1 & 0 & 0 & 0 \end{pmatrix}$$ of $\Gamma_{5,4}^\pi$ given in Example~\ref{ex:basis}. The corresponding bipartite graph is given below.

\begin{center}\begin{tikzpicture}
  [scale=1,auto=left,
  ns/.style={circle,fill=white!20,draw=black,minimum size=1mm},
  blank/.style={circle,fill=white!20,minimum size=1mm},
  es/.style={draw=black},
  dash/.style={draw=black}
   ]
  \node[ns] (r1) at (0,2) {\scriptsize{$r_1$}};
  \node[ns] (r2) at (0,1) {\scriptsize{$r_2$}};
  \node[ns] (r3) at (0,0) {\scriptsize{$r_3$}};
  \node[ns] (r4) at (0,-1) {\scriptsize{$r_4$}};
  \node[ns] (r5) at (0,-2) {\scriptsize{$r_4$}};
  \node[ns] (s1) at (5,1.5) {\scriptsize{$s_1$}};
    \node[ns] (s2) at (5,0.5) {\scriptsize{$s_2$}};
    \node[ns] (s3) at (5,-0.5) {\scriptsize{$s_3$}};
    \node[ns] (s4) at (5,-1.5) {\scriptsize{$s_4$}};

  \draw[es]  (r1) edge node[below]{} (s4);
  \draw[es]  (r2) edge node[above]{} (s4);
  \draw[es]  (r3) edge node[above]{} (s1);
  \draw[es]  (r3) edge node[above]{} (s4);
  \draw[es]  (r4) edge node[above]{} (s1);
  \draw[es]  (r5) edge node[above]{} (s1);

\end{tikzpicture}\end{center}
\end{example}

\medskip

The \emph{fill} of a graph is a graph parameter in (social) network analysis, giving some indication of the global connectivity of the graph. In particular, the fill of a graph is described as the probability that for two randomly chosen vertices, there is an edge between them; see \cite{KunegisKONECT}. It is defined  for general graphs as
$\displaystyle f(G)=\frac{E}{(|V||V-1|)/2}$ and
for bipartite graphs    $G=(V_1, V_2, E)$ as  $$f(G)=\frac{|E|}{|V_1|\, |V_2|},$$ where $|X|$ denotes the size of the set $X$ \cite{Kunegis}.  The fill of a graph also arises in older literature on random graph theory \cite{ErdosRenyi}, and more recently as an important topic in the realm of physics and society, as the intra-cluster density of a graph \cite{Fortunato}. Fill is also called the connectance of a graph, most often in the ecology literature when describing food networks; see, e.g.\ \cite{PoisotGravel, DFBG}.

Let $\lfloor \cdot\rfloor$ denote the floor function and $\lceil \cdot \rceil$ denote the ceiling function. The following can be readily verified.

\begin{proposition}
The fill of the bipartite graphs associated to the various matrices we've discussed is easily computed. We summarize in the table below.
\begin{center}
\bgroup
\def\arraystretch{1.5}
\begin{tabular}{|c|c|c|}\hline
Matrices&Space&Fill\\\hline
  $B_{i,j}$, $i\in [m]$, $j\in [n-1]$  & $M_{m,n}$&$\frac1n$ \\\hline
     $C_j$, $j\in [n]$ & $M_{m,n}$ &$\frac1n$
    \\ \hline
    $P_1, P_2, \dots, P_{(n-1)^2}$ &$M_n$&$\frac1n$\\ \hline
    $\hat{B}_1, \hat{B}_2, \dots, \hat{B}_{\frac{m}2(n-1)+1}$&$M_{m,n}$, $m$ even& $\frac1n$\\\hline
    $\tilde{C}_1, \tilde{C}_2, \dots, \tilde{C}_{\lceil\frac{n}{2}\rceil-1}$ &$M_{m, n}$& $ \frac{m+1}{mn}$\\\hline
    $\tilde{B}_1, \tilde{B}_2, \dots, \tilde{B}_{\lfloor\frac{m}{2}\rfloor(n-1)+1}$ &$ M_{m,n}$, $m$ odd, $n$ even& $\frac{m+1}{mn}$ \\\hline
     $\tilde{B}_1, \tilde{B}_2, \dots, \tilde{B}_{\lfloor\frac{m}{2}\rfloor(n-1)+1}$&$ M_{m,n}$, $m$ odd, $n$ odd& $\frac1n$ \\\hline
\end{tabular}
\egroup
\end{center}
 
\end{proposition}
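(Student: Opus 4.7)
The proof plan is to observe that, for the bipartite graph $G = (V_1, V_2, E)$ associated to any matrix $M \in M_{m,n}$, we have $|V_1| = m$, $|V_2| = n$, and $|E|$ equals the number of nonzero entries of $M$. So $f(G) = (\text{number of nonzeros in } M)/(mn)$, and the entire proposition reduces to counting nonzero entries. I would handle each row of the table by a direct count and then compare to the claimed value.

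First I would dispense with the four cases whose claimed fill is $1/n$. For $B_{i,j}$, the definition gives one nonzero in position $(i,j)$ plus $m-1$ nonzeros in column $j+1$, for $m$ nonzeros total; for $C_j$ the single nonzero column yields $m$ nonzeros; for a permutation matrix $P_i \in M_n$ there are exactly $n$ nonzeros. Each division yields $1/n$. For $\hat{B}_i \in M_{m,n}$ with $m$ even, stacking an element $B_i$ of the basis of $\Gamma_{k,n}$ (with $k = m/2$) on top of $B_i^\pi$ doubles the count: the count for $B_i$ is $k$ (by the previous items applied in $M_{k,n}$), so $\hat{B}_i$ has $2k = m$ nonzeros and fill $1/n$.

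Next I would handle the cases with claimed fill $(m+1)/(mn)$. For $\tilde{C}_i$ with $m = 2k+1$ odd and $1 \le i \le \lceil n/2 \rceil - 1$, the top block $C_i$ contributes $k$ nonzeros, the bottom block $C_i^\pi$ contributes $k$ more, and the middle row $\vec{d}_i$ contributes $2$ (here I would check that $i \neq n+1-i$: indeed $i \le \lceil n/2 \rceil - 1$ forces $2i < n+1$). Total: $2k+2 = m+1$. For $\tilde{B}_i$ with $m$ odd and $n$ even, the middle row $\vec{d}_{n/2}$ again has two nonzeros (since $n/2 \neq n/2 + 1$), and each of $B_i, B_i^\pi$ contributes $k$, giving $m+1$ as well. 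In both cases the fill is $(m+1)/(mn)$. Finally, for $\tilde{B}_i$ with $m$ and $n$ both odd, the middle row is the unit vector $\vec{e}_{\lceil n/2 \rceil}$ with a single nonzero, so the count becomes $2k+1 = m$ and the fill is $1/n$.

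The only subtlety — which I would flag as the main (though mild) obstacle — is verifying that the two positions $i$ and $n+1-i$ indexed by $\vec{d}_i$ in $\tilde{C}_i$ really are distinct under the stated range $1 \le i \le \lceil n/2 \rceil - 1$, since an accidental collision would drop the nonzero count by one and change the fill. Handling this bookkeeping carefully in both the even-$n$ and odd-$n$ subcases is what separates the $(m+1)/(mn)$ rows from the $1/n$ rows; once this parity check is done, the rest is direct arithmetic.
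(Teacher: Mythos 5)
Your proof is correct and takes the same approach the paper implicitly relies on (the paper omits the verification as "readily verified"): compute the fill as the number of nonzero entries divided by $mn$ and count nonzeros case by case, with the only point requiring care being that the two positions $i$ and $n+1-i$ of $\vec{d}_i$ are distinct under the stated index ranges, which you check correctly. All seven counts match the table.
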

In particular, we note that our basis elements for $\Gamma_{m,n}^\pi$ have larger fill when $m$ is odd and $n$ is even, versus the three other possible choices of parities ($\frac{m+1}{mn}>\frac1n$).

\section{The faces of $\Gamma_{m,n}$ and of $\Gamma_{m,n}^\pi$}
  The faces of $\Omega_n$ were characterized via the permanent function in \cite{BG}. The faces of $\Gamma_{m,n}$ can be found by letting $K$ be a subset of pairs of indices $(i,j)$, and constructing a $(0,1)$-matrix $B$ such that $b_{i,j}=1\Leftrightarrow (i,j)\in K$. Then the face of $\Gamma_{m,n}$ associated to the matrix $B$ is  $F(B)=\{X\in \Gamma_{m,n}\,|\, 0\leq x_{i,j}\leq b_{i,j}\,\forall i\in[m], j\in[n]\}$. In other words, the face $F(B)$ is the collection of all $m\times n$ stochastic matrices that are entry-wise less than or equal to $B$.
  The vertices of  $F(B)$ are precisely the rectangular  permutation matrices $P$ (that is, the extreme points of $\Gamma_{m,n}$), such that $p_{i,j}\leq b_{i,j}$ for all $i\in[m], j\in[n]$. To determine all non-empty faces of $\Gamma_{m,n}$ one can restrict to considering all
   $(0,1)$-matrices $B$ having \emph{row support}: that is, all $(0,1)$-matrices $B$ such that if $b_{r,s}=1$ for some $r\in[m], s\in[n]$, then there exists a rectangular permutation matrix $P$ such that $p_{r,s}=1$ and $p_{i,j}\leq b_{i,j}$ for all $i\in[m], j\in[n]$.  

   For any matrix $A\in M_{m,n}$,  for a fixed row $i\in[m]$, let $a_{i, \sbullet}=\sum_{j=1}^na_{i,j}$ be the row sum. The following proposition is immediate.

   \begin{proposition}
   %Let $B\in M_{m,n}$ be a $(0,1)$-matrix of  total support. Then t
   The number of vertices of a face $F(B)$ of $\Gamma_{m,n}$, where $B$ is without loss of generality a $(0,1)$-matrix having row support, is $\Pi_{i\in [m]} b_{i, \sbullet}\,$.
   \end{proposition}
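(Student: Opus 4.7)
The plan is to identify the vertices of $F(B)$ explicitly as the rectangular permutation matrices dominated entry-wise by $B$, and then simply count them by independently choosing one position per row from the support of $B$.

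First, I would recall that the vertices (extreme points) of the face $F(B)$ are exactly those extreme points of $\Gamma_{m,n}$ that lie in $F(B)$, and by Theorem~1 these are precisely the rectangular permutation matrices $P$ with $p_{i,j}\le b_{i,j}$ for every $(i,j)$. Because a rectangular permutation matrix has exactly one $1$ in each row (and no column constraint), specifying such a $P$ is equivalent to choosing, for each row $i\in[m]$, a single column $\sigma(i)\in[n]$ with $b_{i,\sigma(i)}=1$.

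Next, I would count these choices. For each row $i$ the number of admissible columns is precisely $b_{i,\sbullet}=\sum_{j=1}^n b_{i,j}$, and the choices across distinct rows are completely independent since there is no column stochasticity constraint. Thus the total number of admissible maps $\sigma:[m]\to[n]$ is $\prod_{i\in[m]} b_{i,\sbullet}$, and this equals the number of vertices of $F(B)$.

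The only subtle point I would address explicitly is the role of the row support hypothesis on $B$: it guarantees that $F(B)$ is nonempty and that every $1$ in $B$ is actually used by at least one vertex, so no row $i$ has $b_{i,\sbullet}=0$ and the count is not vacuously zero. Since each independent choice of $\sigma$ yields a bona fide rectangular permutation matrix $P\le B$ and conversely each such $P$ arises from a unique $\sigma$, the enumeration is a bijection and the formula $\prod_{i\in[m]} b_{i,\sbullet}$ follows. There is no real obstacle here; the proof is essentially a one-line combinatorial counting argument once the characterization of extreme points from Theorem~1 is invoked.
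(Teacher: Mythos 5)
Your proof is correct and follows exactly the argument the paper has in mind: the paper declares the proposition ``immediate,'' and the intended justification is precisely your observation that the vertices of $F(B)$ are the rectangular permutation matrices entrywise dominated by $B$, which are enumerated by independently choosing one supported column per row. Your additional remark about the row support hypothesis (ensuring no row sum is zero and the face is nonempty) is a sensible clarification but does not change the approach.
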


 \begin{example}
 Consider $B=\begin{pmatrix} 1 & 1 \\1 & 1\\0 & 1\end{pmatrix}$. Then there are four  rectangular permutation matrices that are entrywise less than or equal to $B$: $P_1=\begin{pmatrix} 1& 0\\1& 0\\0 & 1\end{pmatrix}$, $P_2=\begin{pmatrix} 1& 0\\0&1\\0&1\end{pmatrix}$, $P_3=\begin{pmatrix} 0 & 1\\1& 0\\0&1\end{pmatrix}$, $P_4=\begin{pmatrix}0&1\\0&1\\0&1 \end{pmatrix}$.
 \end{example}

When considering faces $F(B)$ of $\Gamma_{m,n}^\pi$ we consider all $m\times n$ centrosymmetric stochastic matrices which are entry-wise less than or equal to $B$.
%, we use the extreme points of $\Gamma_{m,n}^\pi$ characterized in Theorem~\ref{thm:ExtCentroStoc}, with the restriction that they are entry-wise less than or equal to $B$.
To determine all non-empty faces of $\Gamma_{m,n}^\pi$ one can restrict to considering all
   $(0,1)$-matrices $B$ having a generalized notion of row support; in the case of centrosymmetric matrices, this refers to  all $(0,1)$-matrices $B$ such that if $b_{r,s}=1$ for some $r\in[m], s\in[n]$, then there exists a  matrix $C\in \mathcal E(\Gamma_{m,n}^\pi)$ such that $c_{r,s}=1$ and $c_{i,j}\leq b_{i,j}$ for all $i\in[m], j\in[n]$. The aforementioned extreme points $\mathcal E(\Gamma_{m,n}^\pi)$ in this case are those characterized in Theorem \ref{thm:ExtCentroStoc}.

   \begin{proposition}\label{prop:facescentrosym}
   %Let $B\in M_{m,n}$ be a $(0,1)$-matrix of  total support. Then t
   The number of vertices of a face $F(B)$ of $\Gamma_{m,n}^\pi$, where $B$ is without loss of generality a $(0,1)$matrix having row support, is
   \begin{enumerate}[(i)]
   \item $\displaystyle \Pi_{i\in [\frac{m}{2}]} b_{i, \sbullet}\,$ if $m$ is even, and
   \item $\displaystyle \left\lceil\frac{b_{\left\lceil\frac{m}{2}\right\rceil, \sbullet}}2\right\rceil\Pi_{i\in [\frac{m-1}{2}]} b_{i, \sbullet}\, $ if $m$ is odd
   \end{enumerate}
   \end{proposition}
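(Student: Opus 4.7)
The plan is to combine the characterization of extreme points of $\Gamma_{m,n}^\pi$ from Theorem~\ref{thm:ExtCentroStoc} with the row-support hypothesis on $B$, and then count admissible extreme points by independently choosing the $1$-positions in the upper half of each candidate matrix. First I would observe that the row-support assumption, together with the centrosymmetry (or ``almost-centrosymmetry'' when $m$ is odd) of every extreme point $C\in\mathcal E(\Gamma_{m,n}^\pi)$ with $C\leq B$, forces $b_{i,j}=b_{m+1-i,n+1-j}$ for all $i\in[m]$, $j\in[n]$; in particular the middle row of $B$ (when $m$ is odd) is itself centrosymmetric.

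For part (i), with $m$ even, Lemma~\ref{lemeven} identifies the vertices of $F(B)$ with the centrosymmetric rectangular permutation matrices $C\leq B$. Such a $C$ is completely determined by its top $m/2$ rows, and row $i$ for $i\in[m/2]$ is a unit row vector whose unique $1$ can sit in any of the $b_{i,\sbullet}$ columns where $b_{i,j}=1$. The mirrored bottom rows automatically lie under $B$ by the centrosymmetry of $B$, and multiplying the per-row counts gives $\prod_{i\in[m/2]}b_{i,\sbullet}$.

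For part (ii), with $m$ odd, Theorem~\ref{thm:ExtCentroStoc}(ii) tells me that a vertex of $F(B)$ has the form $\frac{1}{2}(R+R^\pi)$, where the matrix $\tilde R$ obtained by deleting the center row of $R$ is a centrosymmetric rectangular permutation matrix. The top $k=(m-1)/2$ rows of $\frac{1}{2}(R+R^\pi)$ coincide with the top $k$ rows of $R$, and as in case (i) they contribute the factor $\prod_{i\in[k]}b_{i,\sbullet}$. The remaining freedom is in the center row, which must be either a single $1$ at the central column (only possible when $n$ is odd), or two entries equal to $\frac{1}{2}$ at mirror positions $j$ and $n+1-j$ with $j\neq n+1-j$.

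The main obstacle is verifying that the number of valid center rows is uniformly $\lceil b_{\lceil m/2\rceil,\sbullet}/2\rceil$. I would handle this by a short case split on the parity of $n$ and on whether the central column lies in the support $S=\{j : b_{(m+1)/2,j}=1\}$ of the middle row of $B$: since $S$ is symmetric about the center, its elements decompose into mirrored pairs together with possibly one self-paired central element, and in each sub-case the pair count plus the optional central singleton equals $\lceil|S|/2\rceil$. Multiplying by the top-half count then yields the stated expression.
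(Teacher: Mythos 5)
Your proposal is correct and is precisely the ``simple counting argument'' the paper alludes to but does not write out: reduce to the upper half via the forced centrosymmetry of a row-supported $B$, multiply the per-row counts, and in the odd case count the admissible center rows as unordered mirror pairs in the support of the middle row, which a parity check shows always number $\left\lceil b_{\lceil m/2\rceil,\sbullet}/2\right\rceil$. The only point worth making explicit in a final write-up is the one you already flag, namely that row support (with the paper's condition read as $c_{r,s}>0$ so as to accommodate the $1/2$ entries in the center row) forces $b_{i,j}=b_{m+1-i,n+1-j}$, so the mirrored choices are automatically admissible and the count is an honest product.
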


   While the proof of the above proposition is a simple counting argument, due to the complexity of the value for the $m$ odd case, we illustrate part (ii) in the following example.

   \begin{example}
   Consider $B=\begin{pmatrix}
   1 & 1 & 1\\1 & c& 1\\1 & 1 & 1
   \end{pmatrix}$, where either $c=0$ or $c=1$. We have $b_{1, \sbullet}=3$, and in the case where $c=1$, we have $b_{2, \sbullet}=2$. Proposition \ref{prop:facescentrosym} therefore states that the number of vertices is
   \[
   \displaystyle \left\lceil\frac{b_{2, \sbullet}}2\right\rceil b_{1, \sbullet}=\displaystyle \left\lceil\frac{2}2\right\rceil3=3.
   \]
   Working through the extreme points of $\Gamma_{3,3}^\pi$ we note that there are three that are entrywise less than or equal to B (as expected). They are:
   \[
   P_1=\begin{pmatrix} 1& 0 & 0\\0.5& 0& 0.5\\0 & 0&1\end{pmatrix}\, P_2=\begin{pmatrix} 0& 1 & 0\\0.5& 0& 0.5\\0 & 1&0\end{pmatrix}\,P_3=\begin{pmatrix} 0& 0 & 1\\0.5& 0& 0.5\\1 & 0&0\end{pmatrix}.
   \]
   Considering now the case where $c=1$, we have $b_{2, \sbullet}=3$ instead. Therefore Proposition \ref{prop:facescentrosym} now gives six vertices, which are listed below.
   \[
   \begin{pmatrix} 1& 0 & 0\\0.5& 0& 0.5\\0 & 0&1\end{pmatrix},\, \begin{pmatrix} 0& 1 & 0\\0.5& 0& 0.5\\0 & 1&0\end{pmatrix},\,\begin{pmatrix} 0& 0 & 1\\0.5& 0& 0.5\\1 & 0&0\end{pmatrix},\,
   \begin{pmatrix} 1& 0 & 0\\0& 1& 0\\0 & 0&1\end{pmatrix},\, \begin{pmatrix} 0& 1 & 0\\0& 1& 0\\0 & 1&0\end{pmatrix},\,\begin{pmatrix} 0& 0 & 1\\0& 1& 0\\1 & 0&0\end{pmatrix}.
   \]
   \end{example}
   %% TeX file.
\section*{Acknowledgements}
 Sarah Plosker is supported by NSERC Discovery Grant number 1174582, the Canada Foundation for Innovation (CFI) grant number 35711, and the Canada Research Chairs (CRC) Program grant number 231250.

\end{document}